\author[M.~Balcerzak]{Marek Balcerzak}
\address{Institute of Mathematics, Lodz University of Technology, ul. W\'{o}lcza\'{n}ska 215, 93-005 Lodz, Poland} 
\email{marek.balcerzak@p.lodz.pl}
\author{Paolo Leonetti}
\address{Institute of Analysis and Number Theory, Graz University of Technology,  Kopernikusgasse 24/II, 8010 Graz, Austria}
\email{leonetti.paolo@gmail.com}
\keywords{Ideal statistical convergence; Tauberian condition; submeasures; generalized density ideal; maximal ideals.}
\thanks{P.L. is supported by the Austrian Science Fund (FWF), project F5512-N26.}
\subjclass[2010]{Primary: 40A35, 11B05. Secondary: 54A20.}
\title{A Tauberian theorem for ideal statistical convergence}
   \def\MR#1{}
\newtheorem{thm}{Theorem}[section]
\newtheorem{cor}[thm]{Corollary}
\newtheorem{lem}[thm]{Lemma}
\theoremstyle{definition} 
\newtheorem{defi}[thm]{Definition}
\let\olddefi\defi
\renewcommand{\defi}{\olddefi\normalfont}
\let\oldquestion\question
\renewcommand{\question}{\oldquestion\normalfont}
\newtheorem{example}[thm]{Example}
\let\oldexample\example
\renewcommand{\example}{\oldexample\normalfont}
\let\oldrmk\rmk
\renewcommand{\rmk}{\oldrmk\normalfont}
\newtheorem{claim}{\textsc{Claim}}
\providecommand{\MR}[1]{}
\providecommand{\MR}{\relax\ifhmode\unskip\space\fi MR }
\providecommand{\href}[2]{#2}
\begin{document}

\maketitle
\thispagestyle{empty}

\begin{abstract}
Given an ideal $\mathcal{I}$ on the positive integers, a real sequence $(x_n)$ is said to be $\mathcal{I}$-statistically convergent to $\ell$ provided that 
$$
\textstyle \left\{n \in \mathbf{N}: \frac{1}{n}|\{k \le n: x_k \notin U\}| \ge \varepsilon\right\} \in \mathcal{I}
$$
for all neighborhoods $U$ of $\ell$ and all $\varepsilon>0$. 
First, we show that $\mathcal{I}$-statistical convergence coincides with $\mathcal{J}$-convergence, for some unique ideal $\mathcal{J}=\mathcal{J}(\mathcal{I})$. In addition, $\mathcal{J}$ is Borel [analytic, coanalytic, respectively] whenever $\mathcal{I}$ is Borel [analytic, coanalytic, resp.]. 

Then we prove, among others, that if $\mathcal{I}$ is the summable ideal $\{A\subseteq \mathbf{N}: \sum_{a \in A}1/a<\infty\}$ or the density zero ideal $\{A\subseteq \mathbf{N}: \lim_{n\to \infty} \frac{1}{n}|A\cap [1,n]|=0\}$ then $\mathcal{I}$-statistical convergence coincides with 
statistical convergence. 
This can be seen as a Tauberian theorem which extends a classical theorem of Fridy. 
Lastly, we show that this is never the case if $\mathcal{I}$ is maximal.
\end{abstract}


\section{Introduction}\label{sec:intro}

Let $\mathcal{I}\subseteq \mathcal{P}(\mathbf{N})$ be an ideal, that is, a collection of subsets of the positive integers $\mathbf{N}$ closed under taking finite unions and subsets. 
It is also assumed that $\mathcal{I}$ contains the collection $\mathrm{Fin}$ of finite subsets of $\mathbf{N}$ and that, unless otherwise stated, $\mathcal{I}$ is proper, that is, it is different from $\mathcal{P}(\mathbf{N})$. 
Among the most important ideals we can find the family of asymptotic density zero sets 
$$
\mathcal{Z}:=\left\{A\subseteq \mathbf{N}: \lim_{n\to \infty}\frac{|A\cap [1,n]|}{n}=0\right\}
$$
and the summable ideal
$$
\mathcal{I}_{1/n}:=\left\{S\subseteq \mathbf{N}: \sum_{n \in S}\frac{1}{n}<\infty\right\}.
$$

Let $X$ be a Hausdorff topological space. Given an ideal $\mathcal{I}$, a sequence $(x_n)$ taking values in $X$ is said to be $\mathcal{I}$-convergent to $\ell \in X$, in short $x_n \to_{\mathcal{I}} \ell$, if 
$$
\{n \in \mathbf{N}: x_n \notin U\} \in \mathcal{I}
$$
for all neighborhoods $U$ of $\ell$. In the literature, $\mathcal{Z}$-convergence is usually called \emph{statistical convergence}, see \cite{MR3452220, MR2463821} and references therein. We recall that, if $\mathcal{I}\neq \mathrm{Fin}$ and $X$ has at least two distinct points, then $\mathcal{I}$-convergence does \emph{not} correspond to ordinary convergence with respect to any topology on the same base set, see \cite[Example 2.2]{LMxyz} and \cite[Proposition 4.2]{MR1844385}. 
In particular, the notion of ideal convergence is a 
``proper extension'' 
of classical convergence.

Recently, Das and Savas introduced in \cite{MR2776143} the notion of $\mathcal{I}$\emph{-statistical convergence}: a sequence $(x_n)$ taking values in $X$ is said to be $\mathcal{I}$-convergent to $\ell \in X$ if 
\begin{equation}\label{eq:Istatconv}
\left\{n \in \mathbf{N}: \frac{|\{k \in [1,n]: x_k \notin U\}|}{n} \ge \varepsilon\right\} \in \mathcal{I}
\end{equation}
for all neighborhoods $U$ of $\ell$ and all $\varepsilon>0$ (note that the original definition has been given in the context of normed spaces). The aim of this article is threefold.

First, the authors of \cite{MR2776143} remark that $\mathrm{Fin}$-statistical convergence corresponds to statistical convergence, cf. also \cite[Remark 1]{MR2803700}. Hence, one may wonder whether $\mathcal{I}$-statistical convergence corresponds to $\mathcal{J}$-convergence, for some ideal $\mathcal{J}=\mathcal{J}(\mathcal{I})$. We give a positive answer, in a slightly more general context, see Theorem \ref{thm:marek}.

Second, the same authors claim in \cite[Remark 2]{MR2776143} that there exists a sequence $(x_n)$ which is $\mathcal{Z}$-statistically convergent but not statistically convergent. However, it turns out that their claim is false. Indeed, we show that $\mathcal{Z}$-statistical convergence and statistical convergence coincide, see Theorem \ref{thm:character} and Corollary \ref{cor:idealstat}. As we will explain in the next Section, this is a Tauberian theorem which extends a classical result of Fridy \cite[Theorem 3]{MR816582}. Related results have been extensively studied in the literature, see e.g. \cite{MR2324333, MR3906365, 
MR1659877, MR1653457, MR1989684, 
MR938459, MR1002541, MR1941785, MR2009641, MR2079327}.

Lastly, on the opposite direction, we prove that $\mathcal{I}$-statistical convergence \emph{never} coincides with statistical convergence whenever $\mathcal{I}$ is maximal, see Theorem \ref{thm:maximal}.


\section{Main results}

An ideal $\mathcal{I}$ is said to be a P-ideal if it is $\sigma$-directed modulo finite sets, i.e., for each sequence $(A_n)$ in $\mathcal{I}$ there exists $A \in \mathcal{I}$ such that $A_n\setminus A$ is finite for all $n$. 
By identifying sets of integers with their characteristic functions, we equip $\mathcal{P}(\mathbf{N})$ with the Cantor-space topology and therefore we can assign the topological complexity to the ideals on $\mathbf{N}$. In particular, we can speak about Borel ideals, analytic ideals, meager ideals, etc. It is a folklore result that the the ideals with lowest topological complexity are $F_{\sigma}$-ideals. 
We refer to \cite{MR2777744} for a recent survey on ideals and filters.

A map $\varphi: \mathcal{P}(\mathbf{N}) \to [0,\infty]$ is a submeasure provided that for all $A,B\subseteq \mathbf{N}$: (i) $\varphi(\emptyset)=0$, (ii) $\varphi(A) \le \varphi(B)$ if $A\subseteq B$, (iii) $\varphi(A\cup B) \le \varphi(A)+\varphi(B)$, and (iv) $\varphi(\{n\})<\infty$ for all $n$. In addition, a submeasure $\varphi$ is lower semicontinuous if: (v) $\varphi(A)=\lim_{n\to \infty}\varphi(A\cap [1,n])$ for all $A$. 
By a classical result of Solecki, a (not necessarily proper) ideal $\mathcal{I}$ is an analytic P-ideal if and only if there exists a lower semicontinuous submeasure $\varphi$ such that $\mathcal{I}$ coincides with the exhaustive ideal $\mathrm{Exh}(\varphi)$ generated by $\varphi$, that is, 
$$
\mathcal{I}=\mathrm{Exh}(\varphi):=\{A\subseteq \mathbf{N}: \lim_{n\to \infty} \varphi(A\setminus [1,n]) =0\} 
$$ 
and $\varphi(\mathbf{N})<\infty$, cf e.g. \cite[Theorem 1.2.5]{MR1711328}. 

\begin{defi}\label{def:smoothsubsmeasuresequence}
A sequence of submeasures $\mu=(\mu_n)$ is said to be \emph{smooth} provided that: 
\begin{enumerate}[label={\rm (\textsc{s}\arabic{*})}]
\item \label{item:s1} for all $n \in \mathbf{N}$, $\mu_n$ is supported on a nonempty set $I_n$;
\item \label{item:s2} $\lim_{n\to \infty} \mu_n(\{k\})=0$ for all $k \in \mathbf{N}$;
\item \label{item:s3} $\limsup_{n\to \infty} \mu_n(\mathbf{N})>0$. 
\end{enumerate}

\noindent In this regard, let $\mathcal{Z}_\mu$ be the ideal defined by
$$
\mathcal{Z}_\mu:=\left\{A\subseteq \mathbf{N}: \limsup_{n\to \infty}\mu_n(A \cap I_n)=0\right\}.
$$
\end{defi}


Note that, 
if $\mathcal{I}$ is an ideal on $\mathbf{N}$, then there exists a smooth sequence of submeasures $\mu$ such that $\mathcal{I}=\mathcal{Z}_\mu$: indeed, it is sufficient to set $\mu_n(A)$ equal to the characteristic function $\bm{1}_{\mathcal{P}(\mathbf{N})\setminus \mathcal{I}}(A)$ for each $A \subseteq \mathbf{N}$ and $n \in \mathbf{N}$.

If, in addition, $(I_n)$ is a partition of $\mathbf{N}$ into finite nonempty sets 
then $\mathcal{Z}_\mu$ is a \emph{generalized density ideal}, as introduced by Farah in \cite[Section 2.10]{MR1988247}, cf. also \cite{MR2254542}. 
Recall that every generalized density ideal is an analytic P-ideal: indeed, $\mathcal{Z}_\mu$ coincides with $\mathrm{Exh}(\varphi_\mu)$, where $\varphi_\mu$ is the lower semicontinuous submeasure $\sup_k \mu_k$. The class of generalized density ideals is very rich, including for example all Erd{\H o}s--Ulam ideals (among others, $\mathcal{Z}$), the Fubini product $\emptyset\times \mathrm{Fin}$, simple density ideals \cite{MR3391516}, and ideals defined in \cite{MR1169042} by Louveau and Veli\v{c}kovi\'{c}, cf. \cite[Section 2]{Leo18meager} and references therein.

\begin{defi}\label{def:Istatconv}
Let $\mathcal{I}$ be an ideal and $\mu=(\mu_n)$ be a smooth sequence of submeasures. A sequence $(x_n)$ taking values in a Hausdorff topological space $X$ is said to be $(\mathcal{I},\mu)$\emph{-convergent} to $\ell$, shortened with $x_n \to_{(\mathcal{I},\,\mu)} \ell$, if 
$$
\left\{n \in \mathbf{N}: \mu_n(\{k \in \mathbf{N}: x_k \notin U\}) \notin V\right\} \in \mathcal{I}
$$
for each neighborhood $U$ of $\ell \in X$ and $V$ of $0 \in \mathbf{R}$. 
\end{defi}
 
In other words, the sequence $(x_n)$ is $(\mathcal{I},\mu)$-convergent to $\ell$ if and only if 
$$\mu_n(\{k \in \mathbf{N}: x_k \notin U\})\to_{\mathcal{I}} 0$$  
for each neighborhood $U$ of $\ell$. Moreover, it is clear that $x_n \to_{(\mathrm{Fin},\,\mu)} \ell$ if and only if $x_n \to_{\mathcal{Z}_\mu} \ell$. This observation is generalized in Theorem \ref{thm:marek} below.

Hereafter, let $\lambda=(\lambda_n)$ be the sequence of uniform probability measures on $\mathbf{N}\cap [1,n]$, that is,
\begin{equation}\label{eq:uniformdistribution}
\lambda_n(A)=\frac{|A\cap [1,n]|}{n}
\end{equation}
for all $n \in \mathbf{N}$ and $A\subseteq \mathbf{N}$. Then it is easy to see that, for each ideal $\mathcal{I}$, $(\mathcal{I},\lambda)$-convergence corresponds with $\mathcal{I}$-statistical convergence defined in \eqref{eq:Istatconv}. 
Note that $(\mathcal{I},\mu)$-convergence includes also 
the case of $\mathcal{I}$-lacunary statistical convergence where each $\mu_n$ is the uniform probability measure on $\mathbf{N} \cap [a_n,a_{n+1})$ such that $(a_n)$ is an increasing sequence of positive integers for which $a_{n+1}-a_n \to \infty$, cf. \cite[Definition 6]{MR2803700}. 

We are ready to state our main results (all the proofs are given in Section \ref{sec:proofs}). 
Let us start with an equivalence with the classical notion of ideal convergence.
\begin{thm}\label{thm:marek}
Let $\mathcal{I}$ be an ideal and $\mu$ be a smooth sequence of submeasures. Then there exists a 
unique ideal 
$\mathcal{J}=\mathcal{J}(\mathcal{I},\mu)$ such that $(\mathcal{I},\mu)$-convergence coincides with $\mathcal{J}$-convergence. In addition, $\mathcal{J}$ is proper if and only if $\mu_n(\mathbf{N}) \not\to_{\mathcal{I}} 0$.
\end{thm}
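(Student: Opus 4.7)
The plan is to define $\mathcal{J}$ directly in terms of $\mu$ and $\mathcal{I}$, verify that the definition has the desired properties, and obtain uniqueness from a standard ``characteristic sequence'' argument. Concretely, set
$$
\mathcal{J} := \bigl\{A \subseteq \mathbf{N} : \mu_n(A) \to_{\mathcal{I}} 0 \bigr\} = \bigl\{A \subseteq \mathbf{N} : \forall \varepsilon > 0,\ \{n \in \mathbf{N} : \mu_n(A) \ge \varepsilon\} \in \mathcal{I} \bigr\}.
$$

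First I would check that $\mathcal{J}$ is an ideal containing $\mathrm{Fin}$: monotonicity of each $\mu_n$ gives hereditariness; subadditivity combined with the inclusion $\{n : \mu_n(A\cup B) \ge 2\varepsilon\} \subseteq \{n : \mu_n(A) \ge \varepsilon\} \cup \{n : \mu_n(B) \ge \varepsilon\}$ yields closure under finite unions; and for $A$ finite, condition \ref{item:s2} together with finite subadditivity force $\mu_n(A) \to 0$ in the usual sense, hence $A \in \mathcal{J}$. The equivalence of $(\mathcal{I},\mu)$-convergence with $\mathcal{J}$-convergence is then just an unpacking of definitions: for a sequence $(x_n)$ in a Hausdorff space $X$, a target $\ell$, and a neighborhood $U$ of $\ell$, the set $A_U := \{k \in \mathbf{N} : x_k \notin U\}$ belongs to $\mathcal{J}$ iff $\{n : \mu_n(A_U) \ge \varepsilon\} \in \mathcal{I}$ for every $\varepsilon > 0$, which is precisely what Definition \ref{def:Istatconv} demands (using the basis of neighborhoods $V = (-\varepsilon,\varepsilon)$ of $0 \in \mathbf{R}$). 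Moreover, $\mathcal{J}$ is proper iff $\mathbf{N} \notin \mathcal{J}$, i.e., iff $\mu_n(\mathbf{N}) \not\to_{\mathcal{I}} 0$, giving the second half of the statement.

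Uniqueness is handled by a test-sequence argument: if $\mathcal{J}'$ is any ideal on $\mathbf{N}$ with the same coincidence property, then for each $A \subseteq \mathbf{N}$ take $X = \{0,1\}$ with the discrete topology and the sequence $x_n := \bm{1}_A(n)$ with target $\ell = 0$; one then has $A \in \mathcal{J}'$ iff $x_n \to_{\mathcal{J}'} 0$ iff $x_n \to_{(\mathcal{I},\mu)} 0$ iff $A \in \mathcal{J}$, so $\mathcal{J}' = \mathcal{J}$. I do not foresee a real obstacle: the whole argument is essentially bookkeeping once the right candidate $\mathcal{J}$ is identified. The only delicate point worth flagging is that $\mathcal{J}$ is required to encode convergence in \emph{every} Hausdorff space, and this is exactly what the two-point test-space above makes rigorous.
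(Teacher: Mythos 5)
Your proposal is correct and follows essentially the same route as the paper: the same candidate $\mathcal{J}=\{A:\mu_n(A)\to_{\mathcal{I}}0\}$, the same subadditivity estimate for closure under unions, the same use of \ref{item:s2} for $\mathrm{Fin}\subseteq\mathcal{J}$, and the same characteristic-sequence argument in a two-point Hausdorff space for uniqueness. No gaps.
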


To be precise, we say that 
$(\mathcal{I},\mu)$-convergence "coincides" with $\mathcal{J}$-convergence if, for some Hausdorff space $X$ with at least two points, every sequence $(x_n)$ taking values in $X$ is $(\mathcal{I},\mu)$-convergent to $\ell \in X$ if and only if $(x_n)$ is $\mathcal{J}$-convergent to $\ell$.

At this point, one may ask whether there is some relationship between the pair $(\mathcal{I},\mu)$ and the ideal $\mathcal{J}(\mathcal{I},\mu)$ in Theorem \ref{thm:marek}. First of all, we prove that, under some mild conditions, $\mathcal{J}(\mathcal{I},\mu)$ has the same topological complexity of $\mathcal{I}$.
\begin{thm}\label{thm:marekborel}
Let $\mathcal{I}$ be an ideal and $\mu$ be a smooth sequence of lower semicontinuous submeasures. Then the ideal $\mathcal{J}(\mathcal{I},\mu)$ is Borel \textup{[}analytic, coanalytic, respectively\textup{]} whenever $\mathcal{I}$ is Borel \textup{[}analytic, coanalytic, resp.\textup{]}.
\end{thm}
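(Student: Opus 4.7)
The plan is to first make the ideal $\mathcal{J}:=\mathcal{J}(\mathcal{I},\mu)$ explicit, and then bound its descriptive complexity by writing it as a countable intersection of preimages of $\mathcal{I}$ under Borel measurable self-maps of $\mathcal{P}(\mathbf{N})$. To get the explicit description, apply Theorem \ref{thm:marek} with $X=\{0,1\}$ (discrete) and $\ell=0$: the ideal $\mathcal{J}$ consists of exactly those $A\subseteq \mathbf{N}$ for which the characteristic sequence $(\mathbf{1}_A(k))_k$ is $(\mathcal{I},\mu)$-convergent to $0$. Unfolding Definition \ref{def:Istatconv} and using that $\mu_n$ is supported on $I_n$, this yields
\[
\mathcal{J}=\bigcap_{j\in \mathbf{N}}\bigl\{A\subseteq \mathbf{N}:\{n\in \mathbf{N}:\mu_n(A)\ge 1/j\}\in \mathcal{I}\bigr\}.
\]

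Next, for each $j\in \mathbf{N}$ define $\Phi_j:\mathcal{P}(\mathbf{N})\to \mathcal{P}(\mathbf{N})$ by $\Phi_j(A):=\{n\in \mathbf{N}:\mu_n(A)\ge 1/j\}$, with both copies of $\mathcal{P}(\mathbf{N})$ carrying the Cantor-space topology. A map into the countable power $\{0,1\}^{\mathbf{N}}$ is Borel measurable if and only if each of its coordinate functions is Borel measurable, so it suffices to verify that for every $n\in \mathbf{N}$ the set $\{A:n\in \Phi_j(A)\}=\{A:\mu_n(A)\ge 1/j\}$ is Borel. This is where the lower semicontinuity hypothesis enters: since $\mu_n$ is lower semicontinuous, the superlevel set $\{A:\mu_n(A)>c\}$ is open in the Cantor topology for every $c\in [0,\infty)$, and therefore
\[
\{A:\mu_n(A)\ge 1/j\}=\bigcap_{i\in \mathbf{N}}\{A:\mu_n(A)>1/j-1/i\}
\]
is $G_\delta$, hence Borel. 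Thus each $\Phi_j$ is Borel measurable.

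Finally, the identity $\mathcal{J}=\bigcap_{j\in \mathbf{N}}\Phi_j^{-1}[\mathcal{I}]$ together with the standard descriptive-set-theoretic facts that preimages of Borel [analytic, coanalytic] sets under Borel measurable maps stay Borel [analytic, coanalytic], and that each of these pointclasses is closed under countable intersections, finishes the argument. The genuinely non-routine step is the middle one: the Borel measurability of the $\Phi_j$'s rests on lower semicontinuity of the $\mu_n$'s, and indeed without that hypothesis the sets $\{A:\mu_n(A)\ge 1/j\}$ need not be Borel at all, so the LSC assumption in the statement is used in an essential way.
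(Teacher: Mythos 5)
Your proof is correct, and while it follows the same overall strategy as the paper---realize $\mathcal{J}(\mathcal{I},\mu)$ as a countable Boolean combination of preimages of $\mathcal{I}$ under definable self-maps of $\mathcal{P}(\mathbf{N})$---the technical execution is genuinely different, and in fact more robust. The paper introduces the truncated maps $f_{m,k}(A):=\{n\in\mathbf{N}:\mu_n(A\cap[1,k])>1/m\}$, which are \emph{continuous} because each coordinate of $f_{m,k}(A)$ depends only on the first $k$ coordinates of $A$, and then asserts $\mathcal{J}(\mathcal{I},\mu)=\bigcap_m\bigcup_k f_{m,k}^{-1}[\mathcal{I}]$. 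You instead keep the untruncated maps $\Phi_j(A)=\{n\in\mathbf{N}:\mu_n(A)\ge 1/j\}$, which are no longer continuous but are still Borel (each coordinate set is $G_\delta$, by lower semicontinuity together with monotonicity of $\mu_n$), and use $\mathcal{J}(\mathcal{I},\mu)=\bigcap_j\Phi_j^{-1}[\mathcal{I}]$ with no union over $k$; since Borel-measurable preimages preserve all three pointclasses, continuity was never needed. Your route buys more than elegance: the paper's displayed identity is problematic, because $f_m(A)=\bigcup_k f_{m,k}(A)$ is an \emph{increasing} union, so knowing $f_{m,k}(A)\in\mathcal{I}$ for \emph{some} $k$ does not force $f_m(A)\in\mathcal{I}$. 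Concretely, for $\mu=\lambda$ and $\mathcal{I}=\mathrm{Fin}$ one has $\lambda_n(A\cap[1,k])\le k/n$, so every $f_{m,k}(A)$ is finite and $\bigcap_m\bigcup_k f_{m,k}^{-1}[\mathrm{Fin}]=\mathcal{P}(\mathbf{N})$, whereas $\mathcal{J}(\mathrm{Fin},\lambda)=\mathcal{Z}$. Your decomposition identifies the correct set and yields a complete proof of the theorem.
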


It turns out that if the pair $(\mathcal{I},\mu)$ is "sufficiently nice" then 
we can find the ideal $\mathcal{J}(\mathcal{I},\mu)$ explicitly. 
To this aim, we need the following definitions.
\begin{defi}\label{defi:alphathick}
Given a real $\alpha>0$, we say that an ideal $\mathcal{I}$ is $\alpha$\emph{-thick} provided that $A\notin \mathcal{I}$ whenever there exist a real $c>0$ and infinitely many $n \in \mathbf{N}$ such that $\mathbf{N} \cap [\,n,n+cn^\alpha] \subseteq A$.
\end{defi}

It turns out that $\mathcal{Z}$ is $1$-thick, cf. the proof of Corollary \ref{cor:idealstat}. 
On the other hand, if $\mathcal{I}$ is maximal ideal, then $\mathcal{I}$ is not $\alpha$-thick, for every $\alpha>0$. Indeed, exactly one among the sets $A:=\mathbf{N} \cap \bigcup_{n} [a_{2n}, a_{2n+1}]$ and $A^c$ belongs to $\mathcal{I}$, where $a_1:=1$ and $a_{n+1}:=2^{a_n}$ for all $n \in \mathbf{N}$, and such a set contains infinitely many intervals of the type $\mathbf{N}\cap [n,n+cn^\alpha]$, for each $c>0$.

\begin{defi}\label{defi:alphaflat}
Given a real $\alpha>0$, we say that a sequence of submeasures $\mu=(\mu_n)$ is $\alpha$\emph{-flat} provided that, for each $A\subseteq \mathbf{N}$, there exists a real $d=d(A)>0$ such that 
$$
|\mu_{n+1}(A)-\mu_n(A)| \le d/n^\alpha
$$
for all $n \in \mathbf{N}$.
\end{defi}
%

It is easy to show that the sequence $\lambda$ defined in \eqref{eq:uniformdistribution} is $1$-flat, cf. the proof of Corollary \ref{cor:idealstat} for details. 
More generally, for each $\alpha>0$, a family of $\alpha$-flat sequences is given as follows. Suppose that $\mu$ is smooth and each $\mu_n$ is a probability measure supported on $I_n:=\mathbf{N}\cap [1,\iota_n]$, where $(\iota_n)$ is an increasing sequence in $\mathbf{N}$ such that:
\begin{enumerate}[label={\rm (\roman{*})}]
\item $\iota_n \le bn^\beta$ for all $n$ and some $b,\beta>0$;
\item $\iota_{n+1}-\iota_n \le cn^{\gamma}$ for all $n$ and some $c,\gamma>0$;
\item $|\mu_{n+1}(\{k\})-\mu_n(\{k\})|\le d/n^{\delta}$ for all $n$, all $k\le \iota_n$, and some $d,\delta>0$;
\item $\mu_n(\{k\}) \le e/n^{\eta}$ for all $n$, all $k>\iota_n$, and some $e,\eta>0$;
\item $\alpha \le \min\{\delta-\gamma,\eta-\beta\}$.
\end{enumerate}
Then it is routine to check that $\mu$ is $\alpha$-flat. 
Similarly, if we assume for simplicity that $\iota_n=n$ for all $n$, then $\mathcal{Z}_\mu$ is $\alpha$-thick, for some $\alpha \in (0,1)$, provided that $\mu_n(\mathbf{N} \cap [n-cn^\alpha,n]) \not\to 0$ for all $c>0$. 
It is worth to remark that, in such cases, the ideal $\mathcal{Z}_\mu$ corresponds to the ideal generated by the nonnegative regular matrix $R=\{r_{n,k}: n,k \in \mathbf{N}\}$, where $r_{n,k}:=\mu_n(\{k\})$, cf. e.g. \cite[Section 2]{DFT2019}.

With these premises, we can state the following characterization.
\begin{thm}\label{thm:character}
Let $\nu$ and $\mu$ be two smooth sequences of submeasures such that $\mathcal{Z}_\nu$ is $\alpha$-thick and $\mu$ is $\alpha$-flat, for some $\alpha \in (0,1]$. 
%
%
%
Then $\mathcal{J}(\mathcal{Z}_\nu,\mu)=\mathcal{Z}_\mu$, that is, $(\mathcal{Z}_\nu,\mu)$-convergence coincides with $\mathcal{Z}_\mu$-convergence.
\end{thm}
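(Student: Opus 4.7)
The plan is to verify that $(\mathcal{Z}_\nu, \mu)$-convergence coincides with $\mathcal{Z}_\mu$-convergence; the uniqueness clause of Theorem \ref{thm:marek} then identifies $\mathcal{J}(\mathcal{Z}_\nu, \mu)$ with $\mathcal{Z}_\mu$. Fix a Hausdorff space $X$ with at least two points, a point $\ell \in X$, and a sequence $(x_n)$ in $X$. For every neighborhood $U$ of $\ell$, set $A_U := \{k \in \mathbf{N}: x_k \notin U\}$ and $y^U_n := \mu_n(A_U)$. Since each $\mu_n$ is supported on $I_n$, one has $\mu_n(A_U) = \mu_n(A_U \cap I_n)$, and unpacking the definitions yields
\begin{align*}
x_n \to_{\mathcal{Z}_\mu} \ell &\iff \lim_{n \to \infty} y^U_n = 0 \text{ for every } U, \\
x_n \to_{(\mathcal{Z}_\nu, \mu)} \ell &\iff y^U_n \to_{\mathcal{Z}_\nu} 0 \text{ for every } U.
\end{align*}
Thus the theorem reduces to the claim that, for every fixed $A \subseteq \mathbf{N}$, the sequence $y_n := \mu_n(A)$ satisfies $y_n \to 0$ if and only if $y_n \to_{\mathcal{Z}_\nu} 0$.

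The implication $(\Rightarrow)$ is immediate since $\mathrm{Fin} \subseteq \mathcal{Z}_\nu$. For the Tauberian direction $(\Leftarrow)$ I would argue by contradiction: suppose $y_n \to_{\mathcal{Z}_\nu} 0$ but $y_n \not\to 0$, so that there exist $\varepsilon > 0$ and infinitely many $n \in \mathbf{N}$ with $y_n \ge \varepsilon$. The $\alpha$-flatness of $\mu$ applied to the set $A$ furnishes $d > 0$ with $|y_{n+1} - y_n| \le d/n^\alpha$ for all $n$. Set $c := \varepsilon/(2d)$. For each such $n$ and each integer $0 \le j \le c n^\alpha$, a telescoping estimate gives
$$
y_{n+j} \ge y_n - \sum_{i=0}^{j-1} \frac{d}{(n+i)^\alpha} \ge \varepsilon - \frac{jd}{n^\alpha} \ge \frac{\varepsilon}{2}.
$$
Hence $B := \{k \in \mathbf{N}: y_k \ge \varepsilon/2\}$ contains $\mathbf{N} \cap [n, n + c n^\alpha]$ for infinitely many $n$. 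By $\alpha$-thickness of $\mathcal{Z}_\nu$, this forces $B \notin \mathcal{Z}_\nu$; on the other hand, the neighborhood $V := (-\varepsilon/2, \varepsilon/2)$ of $0$ together with $y_n \to_{\mathcal{Z}_\nu} 0$ yields $B \in \mathcal{Z}_\nu$, a contradiction.

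The main obstacle is precisely this Tauberian step: one must leverage the pointwise slow-variation bound of $\alpha$-flatness to exhibit an entire \emph{block} of consecutive indices of length of order $n^\alpha$ on which $y_k$ stays above $\varepsilon/2$, matched exactly to the quantitative form of $\alpha$-thickness. Everything else is routine bookkeeping with the definitions of $(\mathcal{I},\mu)$-convergence and of $\mathcal{Z}_\mu$.
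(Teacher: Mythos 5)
Your proof is correct and follows essentially the same route as the paper's: reduce to the Tauberian statement for the real sequences $y_n=\mu_n(A)$, use $\alpha$-flatness to propagate $y_n\ge\varepsilon$ to $y_{n+j}\ge\varepsilon/2$ over a block of length $cn^\alpha$, and then invoke $\alpha$-thickness of $\mathcal{Z}_\nu$ to contradict $y_n\to_{\mathcal{Z}_\nu}0$ (the easy direction being $\mathrm{Fin}\subseteq\mathcal{Z}_\nu$, which is the content of the paper's Lemma \ref{lem:implication}). The only difference is cosmetic: your crude bound $\sum_{i=0}^{j-1}d/(n+i)^\alpha\le jd/n^\alpha\le cd$ replaces the paper's integral-comparison estimate in Claim \ref{claim:estimatesum} and suffices for the same conclusion.
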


%

Roughly, Theorem \ref{thm:character} states that if the pair $(\mathcal{I},\mu)$ is sufficiently nice, then
\begin{equation}\label{eq:tauberian}
\mu_n(A) \to_{\mathcal{I}} 0 \,\,\,\,\text{ implies }\,\,\,\,\mu_n(A) \to 0
\end{equation}
for all $A\subseteq \mathbf{N}$. 
At the point, if $A$ is fixed, the real sequence $(x_n)$ defined by $x_n:=\mu_n(A)$ is arbitrary, 
though nonnegative. Hence, in the case $\mathcal{I}=\mathcal{Z}$ and $|x_{n+1}-x_n| \le d/n$ for all $n$ and some $d>0$ (which corresponds to $1$-flatness of the sequence $\mu$ relative to $A$), the claim \eqref{eq:tauberian} can be rewritten as
$$
x_n \to_{\mathcal{Z}} 0 \,\,\,\,\text{ implies }\,\,\,\,x_n \to 0.
$$
Indeed, this is a classical result of Fridy, see \cite[Theorem 3]{MR816582}. Here, he also proves that the Tauberian condition $|x_{n+1}-x_n| \le d/n$ is best possible. 
This has been soon extended by Maddox, in the context of strong summability for slowly oscillating sequences, see \cite{MR938459, MR1002541}. 
A quite different Tauberian condition for Borel summability (related to $\nicefrac{1}{2}$-flatness) can be found in \cite{MR1653457}. 
Finally, there are related results for statistically slowly oscillating sequences \cite{MR2324333, MR1941785, MR2009641, MR2079327} and for sequences which satisfy a gap Tauberian condition \cite{MR3906365, MR1989684}.


As an application of Theorem \ref{thm:character}, we obtain a sufficient condition for the equivalence between statistical convergence and $\mathcal{I}$-statistical convergence.
\begin{cor}\label{cor:idealstat}
Let $\mathcal{I}$ be an ideal such that $\mathcal{I}\subseteq \mathcal{Z}$. Then $\mathcal{I}$-statistical convergence coincides with statistical convergence.
\end{cor}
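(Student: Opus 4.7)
The plan is to reduce the corollary to a direct application of Theorem \ref{thm:character} with $\alpha=1$, taking $\mu=\lambda$ (so that $(\mathcal{I},\mu)$-convergence is precisely $\mathcal{I}$-statistical convergence) and writing $\mathcal{I}$ itself as $\mathcal{Z}_\nu$ for some smooth sequence of submeasures $\nu$, which is possible by the remark immediately following Definition \ref{def:smoothsubsmeasuresequence}. Observing that $\mathcal{Z}=\mathcal{Z}_\lambda$ (boundedness of $\lambda_n(A)$ makes the $\limsup$ formulation equivalent to the usual limit), Theorem \ref{thm:character} will yield $\mathcal{J}(\mathcal{I},\lambda)=\mathcal{Z}$, i.e., $\mathcal{I}$-statistical convergence coincides with $\mathcal{Z}$-convergence, which is statistical convergence.

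So the two things left to check are the $1$-thickness of $\mathcal{I}$ and the $1$-flatness of $\lambda$. For the first, I would use the hypothesis $\mathcal{I}\subseteq \mathcal{Z}$: if $A$ contains an interval $\mathbf{N}\cap[n,n+cn]$ for infinitely many $n$, then for each such $n$,
\[
\lambda_{n+\lfloor cn\rfloor}(A) \;\ge\; \frac{cn}{n+cn+1} \;\longrightarrow\; \frac{c}{1+c}\;>\;0,
\]
so $A\notin \mathcal{Z}$ and hence $A\notin \mathcal{I}$. This shows $\mathcal{I}$ is $1$-thick in the sense of Definition \ref{defi:alphathick}.

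For $1$-flatness, writing $a_n:=|A\cap[1,n]|$ and noting $a_{n+1}-a_n\in\{0,1\}$, a direct computation gives
\[
\lambda_{n+1}(A)-\lambda_n(A) \;=\; \frac{a_{n+1}}{n+1}-\frac{a_n}{n} \;=\; \frac{n(a_{n+1}-a_n)-a_n}{n(n+1)},
\]
and since both $n(a_{n+1}-a_n)$ and $a_n$ lie in $[0,n]$, the absolute value is at most $1/(n+1)\le 1/n$. Thus Definition \ref{defi:alphaflat} is satisfied with $\alpha=1$ and $d(A)=1$ uniformly in $A$.

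Both hypotheses of Theorem \ref{thm:character} hold with $\alpha=1$, so $\mathcal{J}(\mathcal{I},\lambda)=\mathcal{Z}_\lambda=\mathcal{Z}$, completing the proof. No real obstacle is anticipated; the main conceptual point is simply that $\mathcal{I}\subseteq\mathcal{Z}$ is exactly the hypothesis needed to transfer $1$-thickness from $\mathcal{Z}$ to $\mathcal{I}$, and the $1$-flatness of $\lambda$ is a textbook estimate.
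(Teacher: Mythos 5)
Your proposal is correct and follows essentially the same route as the paper: apply Theorem \ref{thm:character} with $\alpha=1$ and $\mu=\lambda$, verify $1$-flatness of $\lambda$ by the elementary estimate $|\lambda_{n+1}(A)-\lambda_n(A)|\le 1/(n+1)$, and derive $1$-thickness of $\mathcal{I}$ from $\mathcal{I}\subseteq\mathcal{Z}$ together with the fact that a set containing infinitely many intervals $\mathbf{N}\cap[n,(1+c)n]$ has upper density at least $c/(1+c)$. The only cosmetic difference is that the paper first shows $\mathcal{Z}$ is $1$-thick and then invokes Lemma \ref{lem:basicimplications}\ref{item:l3}, whereas you fold the two steps into one; you are also slightly more explicit about realizing $\mathcal{I}$ as $\mathcal{Z}_\nu$ via the remark after Definition \ref{def:smoothsubsmeasuresequence}, which the paper leaves implicit.
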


Since $\mathcal{Z}$ is the ideal generated the upper asymptotic density $\mathrm{d}^\star$ defined by 
\begin{equation}\label{eq:dstar}
\mathrm{d}^\star(A):=\limsup_{n\to \infty}\frac{|A \cap [1,n]|}{n}
\end{equation}
for all $A\subseteq \mathbf{N}$ (that is, $\mathcal{Z}=\{A\subseteq \mathbf{N}: \mathrm{d}^\star(A)=0\}$), it follows that Corollary \ref{cor:idealstat} applies to all ideals $\mathcal{I}$ of the type $\{A\subseteq \mathbf{N}: \mu^\star(A)=0\}$, where $\mu^\star$ is an "upper density" on $\mathbf{N}$, in the sense of \cite{LeoTri}, such that $\mathrm{d}^\star \le \mu^\star$ pointwise. In particular, possible choices for $\mu^\star$ are: the upper Banach density, the upper analytic density, the upper P\'{o}lya density, the upper Buck density, together with all upper $\alpha$-densities with $\alpha\ge 0$ (see \cite{LeoTri} for details; cf. also \cite{DiNasso17} for the relationship between ideals and "abstract densities").

In addition, as a special instance of Corollary \ref{cor:idealstat}, we have:
\begin{cor}\label{cor:idealstat2}
$\mathcal{I}$-statistical convergence coincides with statistical convergence if $\mathcal{I}=\mathcal{I}_{1/n}$ or $\mathcal{I}=\emptyset \times \mathrm{Fin}$.
\end{cor}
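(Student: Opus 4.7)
The plan is to deduce both assertions directly from Corollary \ref{cor:idealstat}. Since that corollary already guarantees coincidence of $\mathcal{I}$-statistical and statistical convergence whenever $\mathcal{I}\subseteq \mathcal{Z}$, it is enough to verify the two inclusions
\[
\mathcal{I}_{1/n}\subseteq \mathcal{Z}\qquad\text{and}\qquad \emptyset\times \mathrm{Fin}\subseteq \mathcal{Z}.
\]

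For the first inclusion, I would argue by a standard tail-sum estimate. Given $A\in \mathcal{I}_{1/n}$ and $\varepsilon>0$, choose $N$ so that $\sum_{a\in A,\,a>N}1/a<\varepsilon$. Then for every $n>N$, using $1/a\ge 1/n$ whenever $a\le n$,
\[
\frac{|A\cap[1,n]|}{n}\;\le\;\frac{N}{n}+\frac{|A\cap (N,n]|}{n}\;\le\;\frac{N}{n}+\sum_{a\in A,\,N<a\le n}\frac{1}{a}\;<\;\frac{N}{n}+\varepsilon.
\]
Letting $n\to\infty$ and then $\varepsilon\to 0$ yields $\mathrm{d}^\star(A)=0$, so $A\in\mathcal{Z}$.

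For the second inclusion, I would exploit the realization of $\emptyset\times \mathrm{Fin}$ as a generalized density ideal $\mathcal{Z}_\mu$ attached to a partition $\mathbf{N}=\bigsqcup_n I_n$ into finite nonempty blocks with $|I_n|=n$ and normalized counting submeasures $\mu_n(B)=|B\cap I_n|/|I_n|$. If $A\in \mathcal{Z}_\mu$ then $|A\cap I_n|/|I_n|\to 0$, and writing $T_k=\sum_{j\le k}|I_j|$ one obtains
\[
\frac{|A\cap[1,T_k]|}{T_k}=\sum_{j\le k}\frac{|I_j|}{T_k}\cdot\frac{|A\cap I_j|}{|I_j|},
\]
a convex combination in which the weights satisfy $\max_{j\le k}|I_j|/T_k\to 0$. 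By the standard Toeplitz/Cesaro argument this tends to $0$, and one extends to arbitrary $n$ by sandwiching $T_k\le n<T_{k+1}$ together with $T_{k+1}/T_k\to 1$; hence $A\in \mathcal{Z}$.

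No serious obstacle arises: both reductions amount to routine asymptotic-density estimates. The only step requiring a small amount of bookkeeping is pinning down the generalized density ideal representation of $\emptyset\times \mathrm{Fin}$ used above, after which the Toeplitz averaging closes the argument.
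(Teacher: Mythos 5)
Your first reduction is fine: the tail-sum estimate correctly proves $\mathcal{I}_{1/n}\subseteq\mathcal{Z}$ (the paper simply cites this inclusion as known), and then Corollary \ref{cor:idealstat} applies. The problem is in the second part. The ideal $\mathcal{Z}_\mu$ you define --- a partition $\mathbf{N}=\bigsqcup_n I_n$ with $|I_n|=n$ and $\mu_n(B)=|B\cap I_n|/|I_n|$ --- is \emph{not} a copy of $\emptyset\times\mathrm{Fin}$. One way to see this: your $\mathcal{Z}_\mu$ is tall (any infinite set meets infinitely many blocks, and selecting one point per block yields an infinite subset $B$ with $|B\cap I_n|/n\le 1/n\to 0$), whereas $\emptyset\times\mathrm{Fin}$ is not tall (an infinite subset of a single column has no infinite subset in the ideal). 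Since tallness is an isomorphism invariant, your Toeplitz/Ces\`aro computation, while correct for the ideal it actually addresses, proves nothing about $\emptyset\times\mathrm{Fin}$. Note also that the difficulty is not merely a wrong choice of representation: the conclusion genuinely depends on \emph{which} copy of $\emptyset\times\mathrm{Fin}$ on $\mathbf{N}$ one takes. For instance, enumerating $\mathbf{N}\times\mathbf{N}$ along diagonals, the lower triangle $\{(i,j):j\le i\}$ has all columns finite but upper asymptotic density about $\nicefrac{1}{2}$, so that copy of $\emptyset\times\mathrm{Fin}$ is \emph{not} contained in $\mathcal{Z}$.

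The paper's proof chooses the copy carefully: it realizes $\emptyset\times\mathrm{Fin}$ as $\{A\subseteq\mathbf{N}:\forall k,\ \{n\in A:\upsilon_2(n)=k-1\}\in\mathrm{Fin}\}$, where $\upsilon_2$ is the $2$-adic valuation, so the ``columns'' are the sets $C_k=\{n:\upsilon_2(n)=k-1\}$. If $A$ meets each $C_k$ in a finite set, then for every $k$ the set $A$ is, modulo a finite set, contained in $2^k\mathbf{N}$, whence $\mathrm{d}^\star(A)\le 2^{-k}$ for all $k$ and $A\in\mathcal{Z}$. To repair your argument you would need to replace your block construction with an embedding of this kind (columns that are genuinely infinite and have vanishing upper density uniformly in the tail), rather than a partition into finite blocks.
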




Lastly, we show that the conclusion of Corollary \ref{cor:idealstat} cannot be strenghtened to the whole class of ideals $\mathcal{I}$. Indeed, this is \emph{never} the case if $\mathcal{I}$ is maximal.
\begin{thm}\label{thm:maximal}
Let $\mathcal{I}$ be a maximal ideal. 
Then $\mathcal{I}$-statistical convergence does not coincide with statistical convergence.
\end{thm}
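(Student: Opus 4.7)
The plan is to construct, for the given maximal ideal $\mathcal{I}$, a $\{0,1\}$-valued sequence $x_n := \mathbf{1}_B(n)$ which is $\mathcal{I}$-statistically convergent to $0$ but not statistically convergent. The starting point is the observation recorded in the paper immediately after Definition \ref{defi:alphathick}: with $a_1 := 1$ and $a_{n+1} := 2^{a_n}$, the set $A := \bigcup_{n \ge 1}[a_{2n}, a_{2n+1}]$ satisfies exactly one of $A \in \mathcal{I}$ or $A^c \in \mathcal{I}$. I would first treat the case $A \in \mathcal{I}$; the case $A^c \in \mathcal{I}$ is handled symmetrically, by the analogous construction $B' := \bigcup_{n \ge 1}[a_{2n+1}+1, 2a_{2n+1}] \subseteq A^c$.

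Define $B := \bigcup_{n \ge 1}[a_{2n}, 2a_{2n}]$. Since $2a_{2n} \le 2^{a_{2n}} = a_{2n+1}$ for every $n \ge 1$, we have $B \subseteq A$, and hence $B \in \mathcal{I}$. A routine estimate, exploiting the tower-type growth of $(a_k)$ to absorb the contribution of the earlier blocks into a lower-order error, yields $\lambda_{2a_{2k}}(B) \to 1/2$ and $\lambda_{a_{2k+1}}(B) \to 0$, so $(\lambda_n(B))$ does not converge and $(x_n)$ is not statistically convergent. It therefore remains to verify that $(x_n)$ is $\mathcal{I}$-statistically convergent to $0$, namely that $E_\varepsilon := \{n \in \mathbf{N} : \lambda_n(B) \ge \varepsilon\} \in \mathcal{I}$ for every $\varepsilon > 0$.

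To establish this, I would split $\mathbf{N}$ into the three regions $[a_{2k}, 2a_{2k}]$, $(2a_{2k}, a_{2k+1}]$, and $(a_{2k+1}, a_{2k+2})$, and estimate $\lambda_n(B)$ in each. On the first, $\lambda_n(B) \approx 1 - a_{2k}/n$, so $E_\varepsilon \cap [a_{2k}, 2a_{2k}] \subseteq A$ trivially. On the second, $\lambda_n(B) \approx a_{2k}/n$, and the relevant portion of $E_\varepsilon$ is contained in $(2a_{2k}, a_{2k}/\varepsilon]$. On the third, $\lambda_n(B) \le a_{2k}/a_{2k+1} = a_{2k}/2^{a_{2k}}$, which is less than $\varepsilon$ for all but finitely many $k$. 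The main obstacle is the second region: to keep the thickening inside the $A$-interval $[a_{2k}, a_{2k+1}]$ one needs $a_{2k}/\varepsilon < a_{2k+1}$, and this is exactly where the super-exponential spacing $a_{2k+1} = 2^{a_{2k}}$ is decisive, since $a_{2k}/\varepsilon < 2^{a_{2k}}$ holds for every fixed $\varepsilon > 0$ once $k$ is large. Summing over $k$ yields $E_\varepsilon \subseteq A \cup F_\varepsilon$ for some finite set $F_\varepsilon$, whence $E_\varepsilon \in \mathcal{I}$ and the proof is complete.
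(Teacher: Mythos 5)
Your proof is correct, and it takes a genuinely different --- and considerably more elementary --- route than the paper's. You exploit the dichotomy that, for a maximal ideal, exactly one of $A=\bigcup_n[a_{2n},a_{2n+1}]$ and $A^c$ lies in $\mathcal{I}$ (an observation the paper itself records right after Definition \ref{defi:alphathick}), and in either case you plant the blocks of the witness set $B$ at the left ends of the intervals of whichever of $A$, $A^c$ belongs to $\mathcal{I}$. The decisive point, which you identify correctly, is that the region where $\lambda_n(B)\ge\varepsilon$ extends only up to roughly $2a_{2k}/\varepsilon$, and the tower growth $a_{2k+1}=2^{a_{2k}}$ keeps this entire decay region inside the same interval of $A$ (resp.\ $A^c$); hence $E_\varepsilon\subseteq A\cup F_\varepsilon\in\mathcal{I}$ with $F_\varepsilon$ finite, while $\limsup_n\lambda_n(B)\ge 1/2$ and $\liminf_n\lambda_n(B)=0$, so $B$ lies in $\mathcal{J}(\mathcal{I},\lambda)\setminus\mathcal{Z}$ and, via the two-valued sequence $\bm{1}_B$, the two notions of convergence differ. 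The paper instead proves the stronger structural fact that $\mathcal{J}(\mathcal{I},\lambda)$ is Lebesgue-nonmeasurable --- by transporting $\mathcal{J}\cap\mathcal{F}$ through a homeomorphism of a closed set $\mathcal{F}\subseteq\{0,1\}^{\mathbf{N}}$ onto a non-closed finite-index subgroup of $\{0,1\}^{\mathbf{N}}$ determined by an induced maximal ideal --- and then concludes $\mathcal{J}\neq\mathcal{Z}$ because $\mathcal{Z}$ is $F_{\sigma\delta}$. Your argument is shorter and fully constructive, exhibiting an explicit set separating the two ideals; the paper's argument buys more, namely that for maximal $\mathcal{I}$ the ideal $\mathcal{J}(\mathcal{I},\lambda)$ can never coincide with \emph{any} measurable (in particular any Borel or analytic) ideal, not merely with $\mathcal{Z}$.
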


To conclude, note that the definition of $(\mathcal{I},\mu)$-convergence \emph{depends} on the choice of the sequence $\mu$. Indeed, it is possible that $(\mathcal{I},\mu)$-convergence does not coincide with $(\mathcal{I},\nu)$-convergence, where $\nu$ is another smooth sequence of submeasures for which $\mathcal{Z}_\mu=\mathcal{Z}_{\nu}$.
\begin{example}\label{example2}
Let $x$ be the sequence defined by $x_n=\bm{1}_A(n)$ for all $n \in \mathbf{N}$, where $A:=\bigcup_{n\ge 1} [\,(2n)!,(2n+1)!\,]$. 
Moreover, set $\mu_n=\lambda_n$ and $\nu_n=\mathrm{d}^\star$, as defined in \eqref{eq:uniformdistribution} and \eqref{eq:dstar}, respectively, for all $n \in \mathbf{N}$, so that $\mathcal{Z}_\mu=\mathcal{Z}_\nu=\mathcal{Z}$. 
Note that $\mathrm{d}^\star(A)=\mathrm{d}^\star(A^c)=1$ (in particular, $x$ is not statistically convergent). 
Set $A_m:=\{n\in \mathbf{N}: \lambda_n(A)\ge \nicefrac{1}{m}\}$ for each $m \in \mathbf{N}$. 
Then $(A_m)$ is an increasing sequence of sets and there exists a maximal ideal $\mathcal{I}$ containing all the $A_m$'s. It follows that, for every $\varepsilon>0$, the set $\{n\in \mathbf{N}: \lambda_n(n)\ge \varepsilon\}$ is contained in some $A_m \in \mathcal{I}$, so that $\lambda_n(A) \to_{\mathcal{I}} 0$. Therefore $x_n \to_{(\mathcal{I},\,\mu)} 1$. 
On the other hand, 
\begin{displaymath}
\begin{split}
\{n \in \mathbf{N}: \nu_n(\{k \in \mathbf{N}: |x_k-1|&\ge \nicefrac{1}{2}\})\ge \nicefrac{1}{2}\}\\ &=\{n \in \mathbf{N}: \nu_n(\{k \in \mathbf{N}: x_k=0\})\ge \nicefrac{1}{2}\}\\
&=\{n \in \mathbf{N}: \mathrm{d}^\star(A^c)\ge \nicefrac{1}{2}\}=\mathbf{N} \notin \mathcal{I},
\end{split}
\end{displaymath}
hence $x_n \not\to_{(\mathcal{I},\,\nu)}1$.
\end{example}

We leave as open questions for the interested reader to "characterize" the class of ideals $\mathcal{I}$ for which $\mathcal{I}$-statistical convergence coincides with statistical convergence and to establish whether Theorem \ref{thm:maximal} 
holds for nonmeasurable ideals or those without the Baire property.

\section{Proofs}\label{sec:proofs}

Before we start proving our results, we state the next lemma (which is straightforward, we omit details):
\begin{lem}\label{lem:basicimplications}
Fix ideals $\mathcal{I},\mathcal{J}$ on $\mathbf{N}$, let $\mu,\nu$ be two smooth sequences of submeasures, and fix $\alpha,\beta>0$. Then:
\begin{enumerate}[label={\rm \textup{(}\roman*\textup{)}}]
\item \label{item:l1} $(\mathcal{I},\mu)$-convergence implies $(\mathcal{J},\mu)$-convergence, provided that $\mathcal{I}\subseteq \mathcal{J}$;
\item \label{item:l2} $(\mathcal{I},\mu)$-convergence implies $(\mathcal{I},\nu)$-convergence, 
provided that, for all $A\subseteq \mathbf{N}$, it holds $\nu_n(A) \le \mu_n(A)$ for all sufficiently large $n$;
\item \label{item:l3} $\mathcal{I}$ is $\alpha$-thick implies that $\mathcal{J}$ is $\alpha$-thick, provided that $\mathcal{J}\subseteq \mathcal{I}$;
\item \label{item:l4} $\mathcal{I}$ is $\alpha$-thick implies that $\mathcal{I}$ is $\beta$-thick, provided that $\alpha\le \beta$; 
\item \label{item:l5} $\mu$ is $\alpha$-flat implies that $\mu$ is $\beta$-flat, provided that $\beta\le \alpha$. 
\end{enumerate}
\end{lem}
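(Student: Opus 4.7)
The plan is that each of the five clauses unwinds directly from the relevant definition with only a monotonicity observation; no combinatorial or topological subtlety is required, which matches the authors' assessment of the lemma as straightforward. I would handle all five clauses in a single proof, each as a short paragraph.

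For \ref{item:l1}, \ref{item:l3}, \ref{item:l4}, and \ref{item:l5}, the arguments are essentially one-line monotonicity checks. Clause \ref{item:l1} is an immediate rewriting: the exceptional set witnessing non-convergence belongs to $\mathcal{I}$ by hypothesis, hence to any larger ideal $\mathcal{J}$. Clause \ref{item:l3} is the contrapositive of a similar inclusion: if $A$ contains infinitely many intervals $\mathbf{N}\cap[n,n+cn^\alpha]$, then $A\notin \mathcal{I}$ by $\alpha$-thickness of $\mathcal{I}$, and hence $A\notin \mathcal{J}$ since $\mathcal{J}\subseteq \mathcal{I}$. Clauses \ref{item:l4} and \ref{item:l5} rest on the fact that $t\mapsto n^t$ is nondecreasing (and $t\mapsto n^{-t}$ nonincreasing) for $n\ge 1$: in \ref{item:l4}, when $\alpha\le\beta$ the interval $\mathbf{N}\cap[n,n+cn^\alpha]$ is contained in $\mathbf{N}\cap[n,n+cn^\beta]$, so any set containing infinitely many of the latter also contains infinitely many of the former, and $\alpha$-thickness of $\mathcal{I}$ transfers; in \ref{item:l5}, the bound $d/n^\alpha \le d/n^\beta$ (for $\beta\le \alpha$ and $n\ge 1$) upgrades the $\alpha$-flat inequality to a $\beta$-flat one with the same constant $d$.

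The only mild subtlety is \ref{item:l2}. I would fix a sequence $(x_n)$ with $x_n\to_{(\mathcal{I},\mu)}\ell$, a neighborhood $U$ of $\ell$, and a neighborhood $V$ of $0\in\mathbf{R}$. Using the nonnegativity of each submeasure, pick $\varepsilon>0$ with $[0,\varepsilon)\subseteq V\cap[0,\infty)$, so that the exceptional set $E_\nu:=\{n:\nu_n(A)\notin V\}$ coincides with $\{n:\nu_n(A)\ge \varepsilon\}$, where $A:=\{k: x_k\notin U\}$. The hypothesis $\nu_n(A)\le \mu_n(A)$ for all sufficiently large $n$ then gives $E_\nu\subseteq E_\mu\cup F$ for some finite $F$, where $E_\mu:=\{n:\mu_n(A)\ge \varepsilon\}\in \mathcal{I}$ by $(\mathcal{I},\mu)$-convergence; since $\mathrm{Fin}\subseteq \mathcal{I}$, we conclude $E_\nu\in \mathcal{I}$, as required.

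I do not anticipate any obstacle. The only points deserving a sentence of care are the extraction of an $\varepsilon>0$ from an arbitrary neighborhood of $0$ in \ref{item:l2} and the absorption of a finite correction using $\mathrm{Fin}\subseteq\mathcal{I}$; everything else is a direct substitution into the definitions of Section~\ref{sec:intro} and Definitions~\ref{defi:alphathick} and \ref{defi:alphaflat}.
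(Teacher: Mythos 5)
Your proof is correct; the paper itself omits the argument, declaring the lemma straightforward, and your clause-by-clause unwinding of Definitions \ref{def:Istatconv}, \ref{defi:alphathick} and \ref{defi:alphaflat} is exactly the intended routine verification. The two points you flag in \ref{item:l2} (extracting $\varepsilon>0$ from a neighborhood of $0$ using nonnegativity of submeasures, and absorbing the finitely many exceptional indices via $\mathrm{Fin}\subseteq\mathcal{I}$) are handled correctly and are the only places where anything needed to be said.
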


Thus, let us start with the proof of Theorem \ref{thm:marek}.
\begin{proof}[Proof of Theorem \ref{thm:marek}]
Given the pair $(\mathcal{I},\mu)$, define the family
\begin{equation}\label{eq:JImu}
\mathcal{J}=\mathcal{J}(\mathcal{I},\mu):=\left\{A\subseteq \mathbf{N}: \mu_n(A)\to_{\mathcal{I}} 0\right\}.
\end{equation}
It is clear that $\mathcal{J}$ is closed under subsets. Moreover, $\mathcal{J}$ is closed under finite unions;  indeed, for all $A,B \in \mathcal{J}$ and $\varepsilon>0$, we have $\{n\in \mathbf{N}: \mu_n(A)\ge \varepsilon/2\} \in \mathcal{I}$ and $\{n\in \mathbf{N}: \mu_n(B)\geq\varepsilon/2\} \in \mathcal{I}$; hence 
$$
\{n: \mu_n(A\cup B)\ge \varepsilon\}\subseteq \{n: \mu_n(A)\ge \varepsilon/2\} \cup \{n: \mu_n(B)\ge \varepsilon/2\} \in \mathcal{I},
$$
so that $A\cup B \in \mathcal{J}$. Since $\mu$ is a smooth sequence of submeasures, we have $\lim_{n\to \infty}\mu_n(\{k\})\to 0$ for all $k \in \mathbf{N}$, by \ref{item:s2}. It follows that $\mu_n(\{k\}) \to_{\mathcal{I}} 0$, which implies that $\mathrm{Fin}\subseteq \mathcal{J}$. This shows that $\mathcal{J}$ is an ideal on $\mathbf{N}$ and, in addition, $\mathcal{J}$ is proper if and only if $\mu_n(\mathbf{N}) \not\to_{\mathcal{I}} 0$. 
At this point, let us prove that $(\mathcal{I},\mu)$-convergence coincides with $\mathcal{J}$-convergence. Let $X$ be an Hausdorff space with at least two points, let us say $a$ and $b$, and let $(x_n)$ be a sequence in $X$ which is $(\mathcal{I},\mu)$-convergent to some $\ell \in X$, that is, $\mu_n(\{k \in \mathbf{N}: x_k\notin U\}) \to_{\mathcal{I}} 0$ for each neighborhood $U$ of $\ell$. By the definition of $\mathcal{J}$ in \eqref{eq:JImu}, this is equivalent to $\{k\in \mathbf{N}: x_k \notin U\} \in \mathcal{J}$ for each neighborhood $U$ of $\ell$, i.e., $x_n\to_{\mathcal{J}} \ell$. 
Finally, let us suppose for the sake of contradiction that there exists another ideal $\mathcal{J}^\prime\neq \mathcal{J}$ such that $(\mathcal{I},\mu)$-convergence coincides with $\mathcal{J}^\prime$-convergence. In particular, there exists $A \in \mathcal{J} \triangle \mathcal{J}^\prime$ and $\mathcal{J}$-convergence coincides with $\mathcal{J}^\prime$-convergence. Let $(x_n)$ be the sequence defined by 
$x_n=a$ if $n \in A$ and $x_n=b$ otherwise. 
It follows that exactly one of the conditions $x_n\to_{\mathcal{J}} b$ and $x_n\to_{\mathcal{J}^\prime}b$ is true. This shows that $\mathcal{J}$ is unique, completing the proof.
\end{proof}

Note that the ideal $\mathcal{J}(\mathcal{I},\mu)$ defined in \eqref{eq:JImu} corresponds to $\mathcal{Z}_\mu$ if $\mathcal{I}=\mathrm{Fin}$.

\begin{proof}[Proof of Theorem \ref{thm:marekborel}]
Let us rewrite the ideal $\mathcal{J}(\mathcal{I},\mu)$ defined in \eqref{eq:JImu} as follows:
\begin{displaymath}
\mathcal{J}(\mathcal{I},\mu)=\left\{A\subseteq \mathbf{N}: \forall m \in \mathbf{N}, f_m(A)\in \mathcal{I}\right\},
\end{displaymath}
where, for each $m \in \mathbf{N}$, $f_m: \mathcal{P}(\mathbf{N}) \to \mathcal{P}(\mathbf{N})$ is the function defined by 
$$
f_m(A):=\{n \in \mathbf{N}: \mu_n(A)>1/m\}
$$
for all $A\subseteq \mathbf{N}$. At this point, by the lower semicontinuity of each $\mu_n$, we obtain 
$
\textstyle f_m(A)=\bigcup_{k\in \mathbf{N}}f_{m,k}(A),
$ 
where, for each $k \in \mathbf{N}$, $f_{m,k}: \mathcal{P}(\mathbf{N}) \to \mathcal{P}(\mathbf{N})$ is the function defined by 
$$
f_{m,k}(A):=\{n \in \mathbf{N}: \mu_n(A \cap [1,k])>1/m\}
$$
for all $A\subseteq \mathbf{N}$ and $m \in \mathbf{N}$. 
Therefore
$$
\mathcal{J}(\mathcal{I},\mu)=\left\{A\subseteq \mathbf{N}: \forall m \in \mathbf{N}, \exists k \in \mathbf{N}, f_{m,k}(A)\in \mathcal{I}\right\}=\bigcap_{m \in \mathbf{N}} \bigcup_{k \in \mathbf{N}} f_{m,k}^{-1}\,[\mathcal{I}].
$$
The claim follows by noting that $f_{m,k}$ is continuous and that the continuous preimage of Borel [analytic, coanalytic, respectively] sets is Borel [analytic, coanalytic, resp.], cf. e.g. \cite{MR1619545}.
\end{proof}

To prove the next result, we need the following intermediate lemma.
\begin{lem}\label{lem:implication}
Let $\mu$ and $\nu$ be two smooth sequences of submeasures. Then $\mathcal{Z}_\mu$-convergence implies $(\mathcal{Z}_\nu,\mu)$-convergence.
\end{lem}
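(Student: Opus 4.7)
The plan is to unwind definitions; once the phrase \emph{$\mu_n$ is supported on $I_n$} is used, the implication reduces to the observation that every convergent real sequence (in the ordinary sense, to $0$) is $\mathcal{I}$-convergent to $0$ for any ideal $\mathcal{I}$.

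First, I would fix a Hausdorff space $X$ and a sequence $(x_n)$ in $X$ with $x_n \to_{\mathcal{Z}_\mu} \ell$, and aim to verify the condition in Definition \ref{def:Istatconv} with $\mathcal{I}=\mathcal{Z}_\nu$. Since $\mu_n$ takes nonnegative values, the open neighborhoods of $0 \in \mathbf{R}$ may be replaced by $(-\varepsilon,\varepsilon)$, so it suffices to show that for every neighborhood $U$ of $\ell$ and every $\varepsilon>0$,
\[
E_{U,\varepsilon}\,:=\,\left\{n \in \mathbf{N}: \mu_n(\{k \in \mathbf{N}: x_k \notin U\}) \ge \varepsilon\right\}\,\in\,\mathcal{Z}_\nu.
\]

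Second, I would invoke the hypothesis. Letting $A:=\{k \in \mathbf{N}: x_k \notin U\}$, the assumption $x_n \to_{\mathcal{Z}_\mu} \ell$ gives $A \in \mathcal{Z}_\mu$, i.e. $\limsup_{n} \mu_n(A\cap I_n)=0$. As this is a limsup of nonnegative reals equal to $0$, it is in fact a genuine limit: $\lim_n \mu_n(A\cap I_n)=0$. Using \ref{item:s1}, which tells us that $\mu_n$ is supported on $I_n$ (so that $\mu_n(B)=\mu_n(B\cap I_n)$ for every $B\subseteq \mathbf{N}$), I obtain $\mu_n(A)=\mu_n(A\cap I_n)$ and hence $\lim_n \mu_n(A)=0$.

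Third, the ordinary convergence $\mu_n(A)\to 0$ forces $E_{U,\varepsilon}$ to be finite. Since $\mathcal{Z}_\nu$ is an ideal containing $\mathrm{Fin}$, we get $E_{U,\varepsilon}\in \mathcal{Z}_\nu$, completing the verification. There is no genuine obstacle: the only step requiring care is the transition from $\mu_n(A\cap I_n)$ to $\mu_n(A)$, which is exactly what the support condition \ref{item:s1} is designed to provide, and the conversion of $\limsup=0$ to $\lim=0$ for nonnegative sequences.
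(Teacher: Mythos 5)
Your proof is correct and follows essentially the same route as the paper's: the paper argues by contraposition, extracting infinitely many indices $n$ with $\mu_n(\{k \in \mathbf{N} : x_k\notin U\})\ge\varepsilon$ from the assumption that this set lies outside $\mathcal{Z}_\nu$, which is exactly the contrapositive of your observation that ordinary convergence $\mu_n(A)\to 0$ makes the exceptional set finite and hence a member of $\mathrm{Fin}\subseteq\mathcal{Z}_\nu$. Your direct phrasing is arguably cleaner, and the one step you flag --- passing from $\mu_n(A\cap I_n)$ to $\mu_n(A)$ via the support condition \ref{item:s1} --- is used implicitly in the paper's argument as well.
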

\begin{proof}
Let $(x_n)$ be a sequence in a Hausdorff space $X$, fix $\ell \in X$, and suppose that 
$x_n \not\to_{(\mathcal{Z}_\nu,\mu)} \ell$. 
Then there exist a neighborhood $U$ of $\ell$ and a real $\varepsilon>0$ such that
$$
\left\{n \in \mathbf{N}: \mu_n(\{k \in \mathbf{N}: x_k \notin U\}) \ge \varepsilon\right\}\notin \mathcal{Z}_\nu,
$$
that is,
$$
\limsup_{t \to \infty} \nu_t\left(\left\{n \in \mathbf{N}: \mu_n(\{k \in \mathbf{N}: x_k \notin U\}) \ge \varepsilon\right\}\right)>0.
$$
It follows that there exists a strictly increasing sequence of positive integers $(t_m)$ and a real $\delta>0$ such that
$$
\nu_{t_m}\left(\left\{n \in I_{t_m}: \mu_n(\{k \in I_n: x_k \notin U\}) \ge \varepsilon\right\}\right)\ge \delta
$$ 
for all $m \in \mathbf{N}$. At this point, fix an integer $n_1 \in I_{t_1}$ such that $\mu_{n_1}(\{k \in I_{n_1}: x_k \notin U\}) \ge \varepsilon$ and define recursively a sequence $(n_h)$ of positive integers as follows: for each $h \in \mathbf{N}$, let $n_{h+1}$ be an integer greater than $n_h$ such that $\mu_{n_{h+1}}(\{k \in I_{n_{h+1}}: x_k \notin U\}) \ge \varepsilon$; note that such an integer exists because the sequence $(t_m)$ is infinite and $F:=\{n_1,\ldots,n_h\} \in \mathrm{Fin} \subseteq \mathcal{Z}_\nu$ so that the sequence $\nu_n(F)$ converges to $0$ and, in particular, it is smaller than $\delta$ whenever $n$ is sufficiently large. It follows that 
$$
\mu_{n_{h}}(\{k \in I_{n_{h}}: x_k \notin U\}) \ge \varepsilon
$$ 
for all $h \in \mathbf{N}$, which implies $\{k \in \mathbf{N}: x_k \notin U\} \notin \mathcal{Z}_\mu$. Therefore $(x_n)\not\to_{\mathcal{Z}_\mu}\ell$.
\end{proof}

We proceed now to the proof of Theorem \ref{thm:character}.
\begin{proof}[Proof of Theorem \ref{thm:character}]
On the one hand, thanks to Lemma \ref{lem:implication}, $\mathcal{Z}_\mu$-convergence implies $(\mathcal{Z}_\nu,\mu)$-convergence. 
Conversely, let $(x_n)$ be a sequence in a Hausdorff space $X$ which is not $\mathcal{Z}_\mu$-convergent to $\ell \in X$; hence there exist a neighborhood $U$ of $\ell$, a real $\varepsilon>0$, and an increasing sequence of positive integers $(n_t)$ such that
\begin{equation}\label{eq:inequality0flat}
\mu_{n_t}(\{k \in \mathbf{N}: x_k \notin U\})\ge \varepsilon
\end{equation}
for all $t \in \mathbf{N}$. 
At this point, fix $S\subseteq \mathbf{N}$. Since the sequence $\mu$ is $\alpha$-flat, we obtain that there exists $d=d(S)>0$ such that $f_S(n)\le d/n^\alpha$ for all $n \in \mathbf{N}$,  
where $f_S(n):=\left|\mu_{n}(S)-\mu_{n+1}(S)\right|$ for each $n \in \mathbf{N}$.

\begin{claim}\label{claim:estimatesum}
Fix $S\subseteq \mathbf{N}$. There exists a constant $\kappa=\kappa(\alpha,d(S))>0$ such that 
$$
\sum_{i=1}^{\lfloor cn^{\alpha}\rfloor }f_S(n+i) \le \kappa c.
$$
for all $n\in \mathbf{N}$ and reals $c>0$.
\end{claim}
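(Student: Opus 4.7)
The plan is to extract the bound directly from the definition of $\alpha$-flatness, without needing any refined estimate. By Definition \ref{defi:alphaflat} applied to the subset $S$, there exists $d=d(S)>0$ with $f_S(m)\le d/m^\alpha$ for every $m\in\mathbf{N}$. The goal is therefore to estimate the sum $\sum_{i=1}^{\lfloor cn^\alpha\rfloor}\,d/(n+i)^\alpha$.

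First I would observe that, since $\alpha>0$ and $i\ge 1$, one has $(n+i)^\alpha\ge n^\alpha$, so each summand is crudely bounded by $d/n^\alpha$. Summing this bound over the $\lfloor cn^\alpha\rfloor$ indices gives
$$
\sum_{i=1}^{\lfloor cn^\alpha\rfloor} f_S(n+i)\le \lfloor cn^\alpha\rfloor\cdot\frac{d}{n^\alpha}\le cn^\alpha\cdot\frac{d}{n^\alpha}=cd.
$$
Hence the constant $\kappa:=d(S)$ works, and the dependence announced in the statement (namely, $\kappa=\kappa(\alpha,d(S))$) is automatic; in fact $\alpha$ does not even explicitly enter once the crude monotonicity bound is used.

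There is no real obstacle: the argument is a one-line telescoping-type estimate relying only on the monotonicity $m\mapsto m^{-\alpha}$ being nonincreasing and on the number of summands being at most $cn^\alpha$. A slightly sharper integral comparison (giving, for $\alpha\in(0,1)$, the bound $\tfrac{d}{1-\alpha}[(n+cn^\alpha)^{1-\alpha}-n^{1-\alpha}]$, which is uniformly comparable to $cd$, and, for $\alpha=1$, the bound $d\log(1+c)$) would also suffice, but it is unnecessary for the application in the proof of Theorem \ref{thm:character}, where it is enough to know that the partial sum grows at most linearly in $c$.
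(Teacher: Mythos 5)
Your argument is correct, and it is genuinely simpler than the one in the paper. You bound every summand by the largest term, $f_S(n+i)\le d/(n+i)^\alpha\le d/n^\alpha$ (valid since $\alpha>0$ and $i\ge 1$), and multiply by the number of summands, $\lfloor cn^\alpha\rfloor\le cn^\alpha$, to get the bound $cd$ with $\kappa=d(S)$. The paper instead compares the sum with the integral $\int_0^{cn^\alpha} d\,(n+t)^{-\alpha}\,\mathrm{d}t$, evaluates the antiderivative, and splits into the cases $\alpha\in(0,1)$ and $\alpha=1$, using a Bernoulli-type inequality in the first case to arrive at $\kappa=d/(1-\alpha)$ and the bound $d\log(1+c)\le cd$ in the second. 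Your crude count avoids the case distinction entirely and even yields a uniformly better constant (the paper's $d/(1-\alpha)$ blows up as $\alpha\to 1^-$, whereas yours stays at $d$); the integral comparison is a sharper estimate of the sum itself but, after the final simplifications, produces a weaker $\kappa$. Since all that is needed downstream (in Claim \ref{claim2} and the proof of Theorem \ref{thm:character}) is an upper bound that is linear in $c$ with $\kappa$ independent of $n$ and $c$, either route works. One small wording quibble: what you do is not a ``telescoping-type'' estimate --- no cancellation is exploited --- it is simply a term-count bound using the monotonicity of $m\mapsto m^{-\alpha}$.
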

\begin{proof}
First of all, for every $n\in \mathbf{N}$ and $c>0$, we have the following upper bound
\begin{equation}\label{eq:upperbound}
\sum_{i=1}^{\lfloor cn^{\alpha}\rfloor }f_S(n+i)  \le \sum_{i=1}^{\lfloor cn^\alpha\rfloor}\frac{d}{(n+i)^\alpha} \le \int_{0}^{{cn^\alpha}} \frac{d}{(n+t)^\alpha}\,\mathrm{d}t.
\end{equation}
Hence, if $\alpha \in (0,1)$, we have
\begin{displaymath}
\begin{split}
\sum_{i=1}^{\lfloor cn^{\alpha}\rfloor }f_S(n+i) &\le \frac{d}{1-\alpha}\left((n+cn^\alpha)^{1-\alpha}-n^{1-\alpha}\right)\\
&\le \frac{d}{1-\alpha}\,n^{1-\alpha}\left(\left(1+cn^{\alpha-1}\right)^{1-\alpha}-1\right)\\
&\le \frac{d}{1-\alpha}\,n^{1-\alpha}\cdot cn^{\alpha-1} = \frac{cd}{1-\alpha}.
\end{split}
\end{displaymath}
Similarly, if $\alpha=1$, it follows by \eqref{eq:upperbound} that
\begin{displaymath}
\begin{split}
\sum_{i=1}^{\lfloor cn^{\alpha}\rfloor }f_S(n+i) &\le  d \log \left(\frac{n+cn}{n}\right)=d(\log(1+c))\le cd,
\end{split}
\end{displaymath}
which completes the proof.
\end{proof}

\begin{claim}\label{claim2}
Fix $S\subseteq \mathbf{N}$. Then for all $\delta>0$, there exist $c>0$ and $n_0 \in \mathbf{N}$ such that
\begin{equation}\label{eq:claim2}
|\mu_{n+m}(S)-\mu_n(S)| \le \delta
\end{equation}
for all integers $n\ge n_0$ and all integers $m \in [0,cn^\alpha]$.
\end{claim}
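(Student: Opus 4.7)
The plan is to bound $|\mu_{n+m}(S)-\mu_n(S)|$ via a telescoping sum and then invoke Claim \ref{claim:estimatesum}. Concretely, by the triangle inequality applied to $m$ consecutive differences, one has
\begin{equation*}
|\mu_{n+m}(S)-\mu_n(S)| \;\le\; \sum_{i=0}^{m-1} |\mu_{n+i+1}(S)-\mu_{n+i}(S)| \;=\; \sum_{i=0}^{m-1} f_S(n+i).
\end{equation*}
Peeling off the $i=0$ term to line the sum up with the range used in Claim \ref{claim:estimatesum}, I would write this as $f_S(n) + \sum_{i=1}^{m-1} f_S(n+i)$.

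Next, assuming $m \le cn^\alpha$ (so $m-1 \le \lfloor cn^\alpha\rfloor$) and using the monotonicity of the sum together with the $\alpha$-flatness bound $f_S(n) \le d/n^\alpha$ (where $d=d(S)$), I would estimate
\begin{equation*}
\sum_{i=0}^{m-1} f_S(n+i) \;\le\; \frac{d}{n^\alpha} + \sum_{i=1}^{\lfloor cn^\alpha\rfloor} f_S(n+i) \;\le\; \frac{d}{n^\alpha} + \kappa c,
\end{equation*}
where the last inequality is precisely Claim \ref{claim:estimatesum} with constant $\kappa=\kappa(\alpha,d(S))$.

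Finally, given $\delta>0$, I would choose $c>0$ so small that $\kappa c \le \delta/2$ and then pick $n_0 \in \mathbf{N}$ large enough that $d/n^\alpha \le \delta/2$ for every $n \ge n_0$; combining the two estimates yields \eqref{eq:claim2} for every such $n$ and every $m \in [0,cn^\alpha]$ (the case $m=0$ being trivial). There is no real obstacle here: the only thing to be careful about is the index bookkeeping so that the truncated sum falls within the range $\{1,\ldots,\lfloor cn^\alpha\rfloor\}$ for which Claim \ref{claim:estimatesum} applies, which is why the $f_S(n)$ term is separated out and absorbed via the explicit bound $d/n^\alpha$.
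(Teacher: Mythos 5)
Your argument is correct and is essentially identical to the paper's: both telescope the difference into a sum of consecutive increments $f_S(n+i)$, peel off the leading term and bound it by $d/n^\alpha$ via $\alpha$-flatness, control the remaining block sum by $\kappa c$ using Claim \ref{claim:estimatesum}, and then choose $c$ small and $n_0$ large so that $d/n_0^\alpha+\kappa c\le\delta$. Your index bookkeeping (summing to $m-1$ and noting $m-1\le\lfloor cn^\alpha\rfloor$) is in fact slightly more careful than the paper's, which harmlessly over-counts by writing $\sum_{i=0}^{m}$.
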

\begin{proof}
With the same notation above, it follows by Claim \ref{claim:estimatesum} that 
for all $n \in \mathbf{N}$, all $c>0$, and all integers $m \in [1,cn^\alpha]$, we have that
$$
|\mu_{n+m}(S)-\mu_n(S)| \le \sum_{i=0}^m f_S(n+i) \le f_S(n) + \kappa c \le \frac{d}{n^\alpha}+ \kappa c.
$$
At this point, the wanted inequality \eqref{eq:claim2} is obtained by choosing $c>0$ sufficiently small and $n_0 \in \mathbf{N}$ sufficiently large so that $d/n_0^\alpha+\kappa c \le \delta$.
\end{proof}

To conclude, choosing $\delta=\nicefrac{\varepsilon}{2}$ and $S=\{k \in \mathbf{N}: x_k \notin U\}$ in Claim \ref{claim2} and using inequality \eqref{eq:inequality0flat}, we obtain that there exist $c>0$ and $t_0 \in \mathbf{N}$ such that 
$$
\mu_{n_t+m}(S)\ge \mu_{n_t}(S)-\nicefrac{\varepsilon}{2} \ge \nicefrac{\varepsilon}{2}
$$
for all integers $t\ge t_0$ and all integers $m \in [\,0,cn_t^\alpha\,]$. Therefore 
$$
A:=\left\{n \in \mathbf{N}: \mu_n(S) \ge \nicefrac{\varepsilon}{2}\right\} \supseteq \bigcup_{t\ge t_0} (\mathbf{N}\cap [\,n_t, (1+c)n_t^\alpha\,]).
$$
Since $\mathcal{Z}_\nu$ is $\alpha$-thick, then $A\notin \mathcal{Z}_\nu$. This implies that $x_n\not\to_{(\mathcal{Z}_\nu,\mu)}\ell$.
\end{proof}

\begin{proof}[Proof of Corollary \ref{cor:idealstat}]
Let $\lambda$ be the smooth sequence of submeasures defined in \eqref{eq:uniformdistribution}. Thanks to Theorem \ref{thm:character}, it is  sufficient to show that $\mathcal{Z}=\mathcal{Z}_\lambda$ is $1$-flat and that $\mathcal{I}$ is $1$-thick. 
To this aim, note that, for all $A\subseteq \mathbf{N}$ and $n \in \mathbf{N}$, we have
$$
\left|\lambda_{n}(A)-\lambda_{n+1}(A)\right|=\left|\,\lambda_{n}(A)-\frac{\bm{1}_A(n+1)+n\lambda_n(A)}{n+1}\,\right|\le \frac{1}{n+1}\le \frac{1}{n}.
$$
Hence $\mathcal{Z}$ is $1$-flat. 
Moreover, we claim that $\mathcal{Z}$ is $1$-thick. Indeed, fix $A\subseteq \mathbf{N}$ such that there exist $c>0$ and infinitely many $n \in \mathbf{N}$ for which $\mathbf{N} \cap [\,n,(1+c)n\,] \subseteq A$. Then the upper asymptotic density of $A$ is at least $\nicefrac{c}{1+c}>0$. Hence $A\notin \mathcal{Z}$. 
Therefore $\mathcal{I}$ is $1$-thick by Lemma \ref{lem:basicimplications}\ref{item:l3}.
\end{proof}

\begin{proof}[Proof of Corollary \ref{cor:idealstat2}]
It is known $\mathcal{I}_{1/n} \subseteq \mathcal{Z}$, hence the claim follows by Corollary \ref{cor:idealstat}. Similarly, it is sufficient to prove that there exists a copy of the Fubini product $\mathcal{I}=\emptyset \times \mathrm{Fin}$ on $\mathbf{N}$ which is contained in $\mathcal{Z}$. Thus, note that $\mathcal{I}$ can be written as 
$$
\{A\subseteq \mathbf{N}: \forall k \in \mathbf{N}, \{n \in A: \upsilon_2(n)=k-1\} \in \mathrm{Fin}\},
$$
where $\upsilon_2(n)$ is the biggest exponent $m \in \mathbf{N}$ such that $2^{m}$ divides $n$. Fix $A \in \mathcal{I}$. Then, for each $k \in \mathbf{N}$, there exists a finite set $F=F(k) \subseteq \mathbf{N}$ such that $A\setminus F$ contains only multiples of $2^k$, so that the upper asymptotic density of $A$ is most $1/2^k$. By the arbitrariness of $k$, we conclude that $A \in \mathcal{Z}$. Therefore $\mathcal{I}\subseteq \mathcal{Z}$. 
\end{proof}

\begin{proof}[Proof of Theorem \ref{thm:maximal}]
Let us suppose that $\mathcal{I}$ is a maximal ideal on $\mathbf{N}$. 
Thanks to Theorem \ref{thm:marek}, there exists a unique ideal $\mathcal{J}$ such that  $(\mathcal{I},\lambda)$-convergence coincides with $\mathcal{J}$-convergence; in addition, 
$
\mathcal{J}:=\{A \subseteq \mathbf{N}: \lambda_n(A) \to_{\mathcal{I}} 0\},
$ 
see \eqref{eq:JImu}. To conclude, we show that $\mathcal{J} \neq \mathcal{Z}$ by proving that they have different topological complexities. 
To this aim, it is sufficient to prove that $\mathcal{J}$ is nonmeasurable (on the other hand, it is known that $\mathcal{Z}$ is a $F_{\sigma\delta}$-ideal).

Identifying each set $A\subseteq \mathbf{N}$ with the sequence $(\bm{1}_A(n): n \in \mathbf{N}) \in \{0,1\}^{\mathbf{N}}$, we can rewrite $\mathcal{J}$ as
$$
\textstyle \mathcal{J}=\left\{x \in \{0,1\}^{\mathbf{N}}: \frac{1}{n}\sum_{i\le n} x_i \to_{\mathcal{I}} 0\right\}.
$$
\begin{claim}\label{claim:existencean}
There exists an increasing sequence $(a_n)$ in $\mathbf{N}$ such that $a_n/a_{n+1} \to 0$  as $n\to \infty$ and $\bigcup_{n\in \mathbf{N}}A_{3n-1} \not\in \mathcal{I}$, where $A_n:=\mathbf{N}\cap (a_{n-1},a_n]$ and $a_0:=0$.
\end{claim}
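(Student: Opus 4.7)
The plan is to build $(a_n)$ by first fixing an auxiliary rapidly growing sequence, partitioning its block indices modulo $3$, and then shifting by $0$, $1$, or $2$ so that the residue class known to escape $\mathcal{I}$ lines up with indices of the form $3n-1$.

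First I would pick any sequence $0 = b_0 < b_1 < b_2 < \cdots$ of integers with $b_n/b_{n+1}\to 0$ (for instance $b_n:=2^{2^n}$ for $n\geq 1$) and set $B_k := \mathbf{N}\cap(b_{k-1},b_k]$ for $k\geq 1$. For each $s\in\{0,1,2\}$ define
\[
C_s := \bigcup_{n\geq 1} B_{3n-1+s},
\]
so that $\mathbf{N} = B_1 \cup C_0 \cup C_1 \cup C_2$ with pairwise disjoint pieces. Since $B_1$ is finite and $\mathcal{I}$ is proper, at least one of $C_0, C_1, C_2$ fails to lie in $\mathcal{I}$; fix such an $s$.

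Next I would set $a_0 := 0$ and $a_n := b_{n+s}$ for $n\geq 1$. The sequence is strictly increasing and the ratio condition $a_n/a_{n+1} = b_{n+s}/b_{n+s+1}\to 0$ is immediate from the construction. A direct check gives $A_n = B_{n+s}$ for every $n\geq 2$, while $A_1 = B_1\cup\cdots\cup B_{1+s}$ merely absorbs finitely many initial blocks, none of which belongs to $C_s$ (the smallest index appearing in $C_s$ is $2+s$). Consequently,
\[
\bigcup_{n\geq 1} A_{3n-1} \;=\; \bigcup_{n\geq 1} B_{3n-1+s} \;=\; C_s \;\notin\; \mathcal{I},
\]
which proves the claim.

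There is essentially no serious obstacle here; in fact, maximality of $\mathcal{I}$ is not even used in this claim (only properness and $\mathrm{Fin}\subseteq\mathcal{I}$), and the entire argument reduces to a three-case index-shift so that the "$3n-1$" residue class in the new partition matches the residue class provided by the trichotomy.
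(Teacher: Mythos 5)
Your proof is correct and follows essentially the same route as the paper: partition the blocks into the three residue classes of indices modulo $3$, pick a class outside $\mathcal{I}$, and shift the sequence so that this class lands on the indices $3n-1$. Your observation that only properness of $\mathcal{I}$ (together with $\mathrm{Fin}\subseteq\mathcal{I}$) is needed is accurate; the paper invokes maximality only to get \emph{uniqueness} of the surviving class, which plays no role in the conclusion.
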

\begin{proof}
Fix an increasing sequence $(a_n)$ in $\mathbf{N}$ such that $a_n/a_{n+1} \to 0$ and, for each $i \in \{0,1,2\}$, define $R_i:=\bigcup_{n\equiv i\bmod{3}}A_n$. Since $\mathcal{I}$ is maximal, there exists a unique $i_\star \in \{0,1,2\}$ such that $R_{i_\star} \notin \mathcal{I}$. If $i_\star=2$ we are done. If $i_\star=0$ [$i_\star=1$, respectively], just delete one element [two elements, resp.] from the sequence.
\end{proof}
At this point, fix an increasing sequence $(a_n)$ as in Claim \ref{claim:existencean} and define the function $\Lambda: \{0,1\}^{\mathbf{N}} \to \{0,1\}^{\mathbf{N}}$ by 
$$
\Lambda(x)=(x_{a_{3n-1}}: n \in \mathbf{N})
$$
for each sequence $\{0,1\}^{\mathbf{N}}$. Moreover, define the function $f: \mathbf{N}\to \mathbf{N}$ such that 
$f(n)=k$ whenever $n \in A_{k}$, 
and set 
\begin{displaymath}
\begin{split}
\mathcal{F}:=\{x \in \{0,1\}^{\mathbf{N}}: x_i=x_j &\text{ whenever }f(i)=f(j), \\
&\text{ and }x_{a_{3n-2}}=x_{a_{3n-1}} \text{ and } x_{a_{3n}}=0 \text{ for all }n \in \mathbf{N}\},
\end{split}
\end{displaymath}
that is, $\mathcal{F}$ is the sequence of $\{0,1\}$-valued sequences which are constant on all intervals $A_{3n}$ and $A_{3n-1}\cup A_{3n-2}$, 
with value $0$ on all the $A_{3n}$'s. 
Then the restriction of $\Lambda$ on $\mathcal{F}$ is an homeomorphism $\mathcal{F} \to \{0,1\}^{\mathbf{N}}$. Since $\mathcal{F}$ is closed, it is sufficient to show that $\Lambda(\mathcal{J}\cap \mathcal{F})$ is not measurable.

\begin{claim}\label{claim:estimatefactorials}
$\lim_{n\to \infty} \left(x_{a_{n}}-\frac{1}{a_{n}}\sum_{i\le a_{n}}x_i\right)=0$ for all $x \in \mathcal{F}$.
\end{claim}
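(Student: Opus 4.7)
The plan is to exploit the block-constancy built into $\mathcal{F}$ together with the rapid growth condition $a_{n-1}/a_n \to 0$ coming from Claim \ref{claim:existencean}. Concretely, any $x \in \mathcal{F}$ satisfies $x_i = x_j$ whenever $f(i)=f(j)$, so $x$ is constant on each block $A_k = \mathbf{N}\cap (a_{k-1},a_k]$; write $v_k \in \{0,1\}$ for that common value, and note that $x_{a_n} = v_n$ since $a_n \in A_n$.

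With this notation in hand, the first step is to decompose the partial sum along the blocks $A_1,\dots,A_n$:
\[
\sum_{i\le a_n} x_i \;=\; \sum_{k=1}^{n} v_k\,(a_k - a_{k-1}).
\]
Dividing by $a_n$ and isolating the $k=n$ term gives
\[
\frac{1}{a_n}\sum_{i\le a_n} x_i \;=\; v_n\cdot\frac{a_n - a_{n-1}}{a_n} + \frac{1}{a_n}\sum_{k=1}^{n-1} v_k\,(a_k - a_{k-1}).
\]
Therefore
\[
x_{a_n} - \frac{1}{a_n}\sum_{i\le a_n} x_i \;=\; v_n\cdot\frac{a_{n-1}}{a_n} - \frac{1}{a_n}\sum_{k=1}^{n-1} v_k\,(a_k - a_{k-1}).
\]

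The final step is to bound both terms using $|v_k|\le 1$: the first summand has absolute value at most $a_{n-1}/a_n$, and the second is bounded by
\[
\frac{1}{a_n}\sum_{k=1}^{n-1}(a_k-a_{k-1}) \;=\; \frac{a_{n-1}}{a_n}.
\]
Since $a_{n-1}/a_n \to 0$ by the choice of $(a_n)$, the whole expression tends to $0$, which is the claim. There is no genuine obstacle here; the argument is essentially a telescoping bookkeeping, and the only property of $\mathcal{F}$ used is that its elements are constant on every block $A_k$ (the extra conditions $v_{3n-2}=v_{3n-1}$ and $v_{3n}=0$ play no role at this step, but will be needed later when $\Lambda$ restricted to $\mathcal{F}$ is used to transfer nonmeasurability).
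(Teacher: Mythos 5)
Your proof is correct and follows essentially the same route as the paper's: both decompose the partial sum over the blocks $A_k$ (using $|A_k|=a_k-a_{k-1}$ and block-constancy), isolate the $k=n$ term, and bound the remainder by the telescoping sum to get the estimate $2\,a_{n-1}/a_n \to 0$. No meaningful difference.
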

\begin{proof}
Considering that $x_i=x_j$ whenever $f(i)=f(j)$ and that $a_{n}/a_{n+1}\to 0$ as $n\to \infty$, we have that
\begin{displaymath}
\begin{split}
\frac{1}{a_{n}}\sum_{i\le a_{n}}x_i&=\frac{1}{a_{n}}\sum_{i\le n}x_{a_i}|A_i|\\ 
&=\left(1-\frac{a_{n-1}}{a_{n}}\right)x_{a_{n}}+\frac{1}{a_{n}}\sum_{i\le n-1}x_{a_i}|A_i|.
\end{split}
\end{displaymath}
Therefore
\begin{displaymath}
\begin{split}
\left|x_{a_{n}}-\frac{1}{a_{n}}\sum_{i\le a_{n}}x_i\right| &=\left|\frac{a_{n-1}}{a_{n}}x_{a_{n}}-\frac{1}{a_{n}}\sum_{i\le n-1}x_{a_i}|A_i|\right|   \\
& \le \frac{a_{n-1}}{a_{n}}+\frac{1}{a_{n}}\sum_{i\le n-1}|A_i|=2\,\frac{a_{n-1}}{a_{n}} \to 0,
\end{split}
\end{displaymath}
which completes the proof.
\end{proof}

Lastly, we define the function $g:\mathbf{N}\to \mathbf{N}$ by $g(n):=\lceil \frac{n}{3}\rceil$ for all $n \in \mathbf{N}$ (so that $g(f(n))=k$ if $n \in A_{3k-2}\cup A_{3k-1}\cup A_{3k}$), and set
$$
\mathcal{S}:=\{x \in \{0,1\}^{\mathbf{N}}: x_n \to_{\mathcal{U}} 0\},
$$
where $\mathcal{U}:=\{f^{-1}[g^{-1}[A]]: A\in \mathcal{I}\}$ (note that $\mathcal{U}$ is a maximal ideal on $\mathbf{N}$).

\begin{claim}\label{claim:identity}
$\Lambda(\mathcal{J}\cap \mathcal{F})=\mathcal{S}$.
\end{claim}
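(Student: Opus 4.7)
The plan is to establish the two inclusions separately, exploiting the block decomposition of $\mathbf{N}$ into the sets $A_k$ together with Claim~\ref{claim:estimatefactorials} to translate back and forth between the partial averages $p_N := \tfrac{1}{N}\sum_{i\le N}x_i$ (for $x \in \mathcal{F}$) and the reduced sequence $y = \Lambda(x)$.

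For $\Lambda(\mathcal{J}\cap \mathcal{F}) \subseteq \mathcal{S}$: fix $x \in \mathcal{J}\cap \mathcal{F}$ and put $y := \Lambda(x)$, so that $y_n = x_{a_{3n-1}}$. Since $x \in \mathcal{J}$, the set $\{N : p_N \ge 3/4\}$ belongs to $\mathcal{I}$; by Claim~\ref{claim:estimatefactorials} we have $|p_{a_{3n-1}} - y_n| < 1/4$ for all sufficiently large $n$, hence $\{a_{3n-1} : y_n = 1\}$ sits inside $\{N : p_N \ge 1/2\}$ modulo a finite set, and in particular belongs to $\mathcal{I}$. I would then use the explicit structure of the fibers $h^{-1}(k) = A_{3k-2}\cup A_{3k-1}\cup A_{3k}$ (where $h = g \circ f$) together with the maximality of $\mathcal{I}$ to exhibit an $A \in \mathcal{I}$ such that $\{n : y_n = 1\} \subseteq h^{-1}[A]$, which is exactly $\{n : y_n = 1\} \in \mathcal{U}$.

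For $\mathcal{S} \subseteq \Lambda(\mathcal{J}\cap \mathcal{F})$: given $y \in \mathcal{S}$, let $x \in \mathcal{F}$ be the unique preimage under the homeomorphism $\Lambda\!\restriction_{\mathcal{F}}$. I would prove $x \in \mathcal{J}$ by showing, for each $\varepsilon>0$, that $\{N : p_N \ge \varepsilon\} \in \mathcal{I}$. Because $x$ is constant on each $A_k$, for every $N \in A_k$ one has the convex combination $p_N = \tfrac{a_{k-1}}{N} p_{a_{k-1}} + \tfrac{N-a_{k-1}}{N} x_{a_k}$. Iterating this identity together with the telescoping bound $\sum_{m<k}|A_m| x_{a_m} \le a_{k-1}$ and the hypothesis $a_n/a_{n+1}\to 0$, I would produce $k_\varepsilon$ such that, for every $k \ge k_\varepsilon$ with $y_{\lceil k/3\rceil} = 0$, the estimate $p_N < \varepsilon$ holds throughout $A_k$. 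Hence $\{N : p_N \ge \varepsilon\}$ agrees, up to a finite set, with a subset of the saturation $h^{-1}[\{n : y_n = 1\}]$, which lies in $\mathcal{I}$ precisely because $y \in \mathcal{S}$.

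The main obstacle I anticipate is the quantitative control in the second direction: the "carry over" term $\tfrac{a_{k-1}}{N} p_{a_{k-1}}$ from all previous blocks must be shown to be uniformly negligible with respect to any fixed $\varepsilon$. This is where the rapid growth $a_n/a_{n+1}\to 0$ plays a decisive role, ensuring that even on blocks labelled by $y_k = 0$ the accumulated mass produced by earlier blocks with $y_k = 1$ dilutes below $\varepsilon$. The first direction is conceptually easier since Claim~\ref{claim:estimatefactorials} already pins $p_N$ near $y_n$ at the sparse selectors $N = a_{3n-1}$, and the work reduces to repackaging a membership in $\mathcal{I}$ via the fiber map $h$.
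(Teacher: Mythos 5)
Your overall strategy coincides with the paper's: prove the two inclusions separately, using the block structure of $(A_k)$ and Claim~\ref{claim:estimatefactorials} to pass between the running averages $p_N:=\frac{1}{N}\sum_{i\le N}x_i$ and the reduced sequence $y=\Lambda(x)$. Your inclusion $\mathcal{S}\subseteq\Lambda(\mathcal{J}\cap\mathcal{F})$ is essentially the paper's argument and is sound: since $x\in\mathcal{F}$ vanishes on every $A_{3n}$, the average $p_{a_{3(u-1)}}$ carried into the fibre $A_{3u-2}\cup A_{3u-1}\cup A_{3u}$ tends to $0$ by Claim~\ref{claim:estimatefactorials}, so on fibres with $y_u=0$ one has $p_N\le p_{a_{3(u-1)}}<\varepsilon$ for all large $u$, and $\{N: p_N\ge\varepsilon\}$ is covered, modulo a finite set, by $f^{-1}[g^{-1}[\{u: y_u=1\}]]\in\mathcal{I}$.

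The gap is in the inclusion $\Lambda(\mathcal{J}\cap\mathcal{F})\subseteq\mathcal{S}$, which you describe as the conceptually easier one; it is in fact the direction that needs the extra ideas. What you actually establish is that the \emph{sparse} selector set $\{a_{3v-1}: v\in V\}$, with $V:=\{v: y_v=1\}$, lies modulo a finite set in $M:=\{N: p_N\ge\varepsilon\}\in\mathcal{I}$. What must be shown is that the \emph{saturated} set $\tilde V:=\bigcup_{v\in V}(A_{3v-2}\cup A_{3v-1}\cup A_{3v})$ belongs to $\mathcal{I}$, and membership of a thin set of selector points in an arbitrary maximal ideal gives no control over the union of the blocks containing them: $\mathcal{I}$ could perfectly well contain the whole set $\{a_n: n\in\mathbf{N}\}$, in which case your observation is vacuous. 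Nor can $\tilde V$ be covered by $M$ alone, no matter how the estimates are sharpened: on the initial segment of $A_{3v-2}$ the running average is still close to $p_{a_{3v-3}}\approx 0$, and on the tail of $A_{3v}$ it has already decayed below $\varepsilon$, so those portions of $\tilde V$ lie outside $M$.

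Two ingredients close this. First, upgrade from points to blocks: since $x\equiv 1$ on $(a_{3v-3},a_{3v-1}]$ for $v\in V$, every $N\in A_{3v-1}$ satisfies $p_N\ge 1-a_{3v-3}/a_{3v-2}$, which exceeds $\varepsilon$ for all large $v$ because $a_{n}/a_{n+1}\to 0$; hence $\bigcup_{v\in V}A_{3v-1}\subseteq M\cup G$ with $G$ finite. Second --- and this is the ingredient your sketch omits entirely --- the leftover part $\bigcup_{v\in V}(A_{3v-2}\cup A_{3v})$ is absorbed into the fixed set $\bigcup_{n\not\equiv 2\bmod{3}}A_n$, which lies in $\mathcal{I}$ precisely because Claim~\ref{claim:existencean} arranged $\bigcup_{n}A_{3n-1}\notin\mathcal{I}$ and $\mathcal{I}$ is maximal. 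This is the entire reason the blocks are grouped in threes and re-indexed there; an appeal to ``the explicit structure of the fibers together with maximality'' does not produce such a set unless that claim is invoked.
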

\begin{proof}
First, we show that $\mathcal{S}\subseteq \Lambda(\mathcal{J}\cap \mathcal{F})$. Fix a sequence $y \in \mathcal{S}$ and set $U:=\{n\in \mathbf{N}: y_n=1\}$. 
We need to prove that there exists $x \in \mathcal{J} \cap \mathcal{F}$ 
such that $\Lambda(x)=y$. Note that there is exactly one sequence $x \in \mathcal{F}$ such that $\Lambda(x)=y$, that is, the unique one in $\mathcal{F}$ such that $x_{a_{3n-1}}=y_n$ for all $n \in \mathbf{N}$. Let us show that $x \in \mathcal{J}$, i.e., $\frac{1}{n}\sum_{i\le n}x_i\to_{\mathcal{I}}0$. Since $y \in S$, then $U \in \mathcal{U}$, hence 
$$
\textstyle \tilde{U}:=f^{-1}[g^{-1}[U]]=\bigcup_{u \in U} (A_{3u} \cup A_{3u-1}\cup A_{3u-2})\in \mathcal{I}.
$$
This implies, thanks to Claim \ref{claim:estimatefactorials} (see also Figure \ref{fig:graph} below), that, for each $\varepsilon>0$, there exists a finite set $F_\varepsilon \in \mathrm{Fin}$ such that
$$
\textstyle \left\{n \in \mathbf{N}: \frac{1}{n}\sum_{i\le n}x_i\ge \varepsilon\right\} \subseteq F_\varepsilon \cup \tilde{U} \in \mathcal{I}.
$$
Therefore $x \in \mathcal{J} \cap \mathcal{F}$ and $\Lambda(x)=y$. 

Conversely, let us prove that $\Lambda(\mathcal{J}\cap \mathcal{F})\subseteq \mathcal{S}$. Fix a sequence $x \in \mathcal{J} \cap \mathcal{F}$. Then we need to show that $\Lambda(x) \in \mathcal{S}$, that is, $x_{a_{3n-1}} \to_{\mathcal{U}} 0$. This is equivalent to $V:=\{n \in \mathbf{N}: x_{a_{3n-1}}=1\} \in \mathcal{U}$ and also, by the definition of $\mathcal{U}$, to 
$$
\textstyle  \tilde{V}:=f^{-1}\left[g^{-1}\left[V\right]\right]=\bigcup_{v \in V}(A_{3v}\cup A_{3v-1}\cup A_{3v-2}) \in \mathcal{I}.
$$
By assumption we know $\frac{1}{n}\sum_{i\le n}x_i \to_{\mathcal{I}} 0$ and the sequence $x$ is constant on each interval $A_n$, with $x_{a_{3n}}=0$ and $x_{a_{3n-1}}=x_{a_{3n-2}}$ for all $n$. Fix a sufficiently small $\varepsilon>0$, hence 
$
M:=\left\{n \in \mathbf{N}: \frac{1}{n}\sum_{i\le n}x_i\ge \varepsilon\right\} \in \mathcal{I}.
$
Thanks to Claim \ref{claim:estimatefactorials}, we obtain
$$
\textstyle G:=\left(\bigcup_{v \in V}A_{3v-1}\right)\setminus M \in \mathrm{Fin}.
$$
Therefore, thanks to Claim \ref{claim:existencean}, we conclude that 
$$
\textstyle \tilde{V} \subseteq \left(\bigcup_{n \not\equiv 2\bmod{3}}A_n\right)\cup \bigcup_{v \in V}A_{3v-1} \subseteq \left(\bigcup_{n \not\equiv 2\bmod{3}}A_n\right) \cup M \cup G \in \mathcal{I},
$$
which completes the proof.
\end{proof}

Identifying $\{0,1\}$ with the additive group $\mathbf{Z}_2$, we have that $\mathcal{S}$ is a subgroup of the compact group $\{0,1\}^{\mathbf{N}}$. Moreover, $\mathcal{S}$ is not closed (since $\bm{1}_{\{1,\ldots,k\}}(n)\to_{\mathcal{U}}0$ for all $k \in \mathbf{N}$ but $\bm{1}_{\mathbf{N}}(n) \not\to_{\mathcal{U}}0$) and it has finite index (since $\mathcal{U}$ is a maximal ideal, then there are exactly two cosets of $\mathcal{S}$). It follows by \cite[Proposition 1.1(c)]{MR3441133} that $\mathcal{S}$ is nonmeasurable. Thus, thanks to Claim \ref{claim:identity}, $\Lambda(\mathcal{J}\cap \mathcal{F})$ is nonmeasurable.
\end{proof}


\begin{figure}[!htb]
\centering
\begin{tikzpicture}
[scale=1.6]
\node (xbeg) at (-.4,0){};
\node (xbega) at (.2,0){};
\node (xbegb) at (.7,0){};
\node (xend) at (7.5,0){};
\node (ybeg) at (0,-.4){};
\node (yend) at (0,2.5){};
\draw[thin] (xbeg) -- (xbega);
\draw[dashed] (.1,0) -- (.8,0);
\draw[-latex] (xbegb) -- (xend);
\draw[-latex] (ybeg) -- (yend);

\draw[thin](-.06,2)--(.06,2);
\node (ordinate) at (-.3,2){{\footnotesize $1$}};
\draw [dashdotted, very thin] (.15,2)-- (7.3,2);

\node (a3n3) at (.7,-.25){{\footnotesize $a_{3u-3}$}};
\node (a3n2) at (2,-.25){{\footnotesize $a_{3u-2}$}};
\node (a3n1) at (4,-.25){{\footnotesize $a_{3u-1}$}};
\node (a3n0) at (7,-.25){{\footnotesize $a_{3u}$}};
\draw (1,-.06)--(1,.06);
\draw (2,-.06)--(2,.06);
\draw (4,-.06)--(4,.06);
\draw (7,-.06)--(7,.06);

\draw[thin](-.06,.3)--(.06,.3);
\node (ordinate) at (-.2,.3){{\tiny $\varepsilon$}};
\draw [dashdotted, very thin] (.15,.3)-- (7.3,.3);

\draw[thin](-.06,1.7)--(.06,1.7);
\node (ordinate) at (-.35,1.7){{\tiny $1-\varepsilon$}};
\draw [dashdotted, very thin] (.15,1.7)-- (7.3,1.7);

\node (n3) at (1,.23){{\tiny $\bullet$}};
\node (n2) at (2,2-.15){{\tiny $\bullet$}};
\node (n1) at (4,2-.07){{\tiny $\bullet$}};
\node (n0) at (7,.07){{\tiny $\bullet$}};

\draw[thick] (1,.23) to[out=80, in=185] (2,2-.15) to[out=5, in=180](4,2-.07);
\draw[thick] (4,2-.07) to[out=280, in=170] (7,.07);


\draw[dotted] (2,0)-- (2, 2-.15);
\draw[dotted] (4,0)-- (4, 2-.07);

\draw[dotted] (1.04,-.5)-- (1.04, .3);
\draw[dotted] (5.9,-.5)-- (5.9, .3);

\draw [decorate,decoration={brace,amplitude=10pt}] (5.9,-.5)--(1.04,-.5);
\node (jhgf) at (.52+2.95, -.9){{\tiny $\left\{n \in A_{3u-2}\cup A_{3u-1}\cup A_{3u}: \frac{1}{n}\sum_{i\le n}x_i\ge \varepsilon\right\}$}};
\end{tikzpicture}
\caption{Graph of the sequence $\left(\frac{1}{n}\sum_{i\le n}x_i\right)$ in the case $x_{a_{3u-1}}=1$.}
\label{fig:graph}
\end{figure}
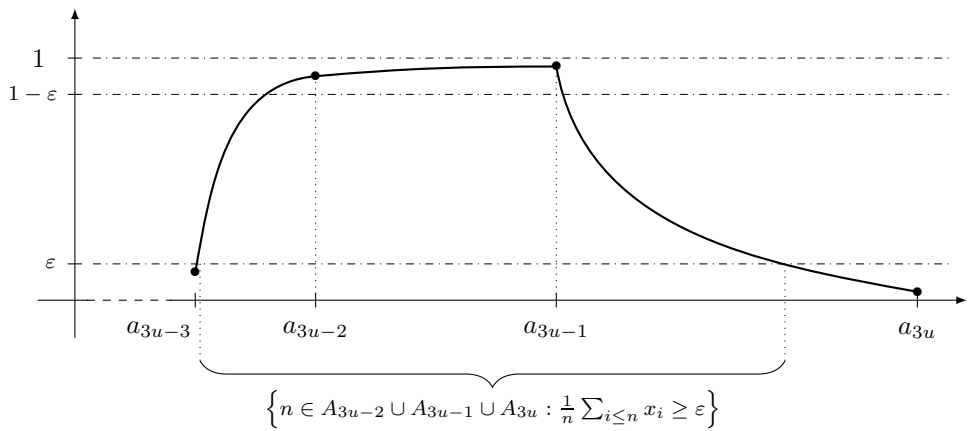

\bibliographystyle{amsplain}
\bibliography{ideale}

\end{document}